\newtheorem{theorem}{Theorem}[section]
\newtheorem{conjecture}[theorem]{Conjecture}
\newtheorem{lemma}[theorem]{Lemma}
\newtheorem{corollary}[theorem]{Corollary}
\newtheorem{proposition}[theorem]{Proposition}
\theoremstyle{definition}
\newtheorem{definition}[theorem]{Definition}
\newtheorem{remark}[theorem]{Remark}
\newtheorem*{remark*}{Remark}
\newtheorem*{hypothesis*}{Setting}
\DeclareMathOperator{\Alb}{Alb}
\DeclareMathOperator{\Aut}{Aut}
\DeclareMathOperator{\End}{End}
\DeclareMathOperator{\id}{id}
\DeclareMathOperator{\Jac}{Jac}
\DeclareMathOperator{\Ker}{Ker}
\DeclareMathOperator{\Kum}{Kum}
\DeclareMathOperator{\lcm}{lcm}
\DeclareMathOperator{\Sym}{Sym}
\newcommand{\ov}{\overline}
\newcommand{\wt}{\widetilde}
\newcommand{\BN}{{\mathbb {N}}}
\newcommand{\BP}{{\mathbb {P}}}
\newcommand{\BZ}{{\mathbb {Z}}}
\newcommand{\CC}{{\mathcal {C}}}
\newcommand{\CN}{{\mathcal {N}}}
\newcommand{\CP}{{\mathcal {P}}}
\newcommand{\RH}{{\mathrm {H}}}
\newcommand{\tX}{{\widetilde{X}}}
\title[Rational curves and the Hilbert Property]{Rational curves and the Hilbert Property on Jacobian Kummer varieties}
\author{Damián Gvirtz-Chen \and Zhizhong Huang}
\address{School of Mathematics and Statistics,
University of Glasgow,
University Place,
Glasgow G12 8QQ
United Kingdom}
\email{damian.gvirtz@glasgow.ac.uk}
\address{Institute of Mathematics, Academy of Mathematics and Systems Science, Chinese Academy of Sciences, Beijing, 100190, China}
\email{zhizhong.huang@yahoo.com}
\subjclass{14G05, 11G35, 14J28}
\keywords{Hilbert Property, Jacobian Kummer varieties}
\begin{document}
    \begin{abstract}
     Let $K$ be a finitely generated field of characteristic zero. A conjecture by Corvaja and Zannier predicts that smooth, projective, simply connected varieties over $K$ with Zariski dense set of rational points have the Hilbert Property.
    
    In this article, we confirm this conjecture for all Kummer surfaces associated to the Jacobian of a genus $2$ curve over $K$, and in general for all Jacobian Kummer varieties associated to a hyperelliptic curve of genus $\geq 3$ of odd degree over $K$.
	\end{abstract}
\maketitle
	
\section{Introduction}
The aim of this article is to introduce the use of rational curves into the study of the Hilbert Property for non-unirational varieties.

\subsection{Background for the Hilbert Property}
Let $X$ be a quasi-projective integral variety of dimension at least one over a field $K$. Recall after \cite[\S3]{Serre} (see also \cite[\S2]{C-T2020AA}) that a subset $A\subset X(K)$ is \emph{thin} if there exists a $K$-morphism $f:Y\to X$ from a $K$-variety $Y$ to $X$ such that \begin{itemize}
    \item $A\subset f(Y(K))$;
    \item the fibre of $f$ over the generic point of $X$ is finite, and $f$ does not have any rational sections.
\end{itemize}
\begin{definition}[\cite{CT-Sansuc} p. 189]\label{def:HP}
We say that $X$ has the \emph{Hilbert Property} (or $X$ is \emph{of Hilbert type}) if $X(K)$ is not a thin set.
\end{definition}
The field $K$ is called \emph{Hilbertian} if the projective line $\BP^1_K$ satisfies the Hilbert Property.  Not every base field is Hilbertian. The field of rational numbers, and more generally all number fields, are classically known to be Hilbertian by the Hilbert Irreducibility Theorem \cite{Hilbert} (see \cite[Theorem~3.4.3]{Serre}). In this article, unless explicitly stated to the contrary, we will assume that $K$ is a finitely generated field of characteristic $0$. These fields are known to be Hilbertian (see e.g. \cite[Chapter 9]{Lang} and \cite[\S13.4]{Fried-Jarden}).

It should be pointed out that \Cref{def:HP} looks different from the one introduced later in \cite[\S1.1 Definition]{Corvaja-Zannier}, \cite[Definition 1.1]{CDJLZ}. We shall provide a proof (see \Cref{prop:thincoverequiv} and \Cref{co:thincoverequiv} below) that all these definitions are actually equivalent.

It is clear from the definition that Zariski density of $X(K)$ in $X$ is a necessary condition for the Hilbert Property, and that the Hilbert Property is a birational invariant. Moreover, if $X$ satisfies the Hilbert Property, then it is geometrically integral (see \cite[p. 20]{Serre}). 
The Hilbert Property is a specialisation of weak weak approximation (see \cite[\S3.5, Corollary 3.5.4, Definition 3.5.6]{Serre} and also \cite[\S2 (AFF), Th\'eo\-r\`eme 2.3]{C-T2020AA}) and provides an approach to solving the Inverse Galois Problem (see for example the book \cite{Serre} and notably \textit{ibid.}, Theorem 3.5.9).   

There is a well-known topological obstruction to the Hilbert Property which goes back to the Chevalley--Weil Theorem. Namely, a projective variety $X$ cannot have the Hilbert Property if it is not algebraically simply connected (or equivalently for normal $X$, if its fundamental group has a proper subgroup of finite index), see \cite[Theorem~1.4]{Corvaja-Zannier}. Corvaja and Zannier conjecture that the converse is true.
\begin{conjecture}[\cite{Corvaja-Zannier}, \S2 Question-Conjecture~1]\label{conj:CZ}
Let $X$ be a smooth, projective and algebraically simply connected variety over $K$. If $X(K)$ is Zariski dense, then $X$ has the Hilbert Property.\footnote{In \cite[\S2 Question-Conjecture~1bis]{Corvaja-Zannier} one finds an affine analogue of this conjecture concerning integral points. However, we are informed by Prof.\ Corvaja that in some cases of the analogue a finite field extension is necessary. Whether this is also the case for \Cref{conj:CZ} is unclear to the authors.}
\end{conjecture}
While there have been quite a number of results confirming the weak weak approximation property for unirational varieties (they are simply connected by \cite{Serre-uni}), as conjectured by Colliot-Th\'el\`ene (see \cite{CT-Sansuc}, \cite[Conjecture 3.5.8]{Serre}), progress towards the Hilbert Property for intermediate type varieties has been very slow.
In particular Corvaja and Zannier ask for a proof that the Hilbert Property holds for Kummer surfaces with a Zariski dense set of rational points \cite[Appendix~1]{Corvaja-Zannier}. (See \cite[p.\ 817]{H-Sk} for a stronger conjecture of weak weak approximation on K3 surfaces and \cite{Holmes-Pannekoek} for a consequence on the rank of quadratic twists of abelian varieties.)

\subsection{Main results and discussion}
 Let $H$ be a smooth, projective hyperelliptic curve of genus $g\geq 2$ over a finitely generated field $K$ of characteristic $0$. Let $\Jac(H)$ be its Jacobian variety and let $\Kum(\Jac(H))$ be the associated Kummer quotient, i.e.\ the quotient of $\Jac(H)$ by the natural involution. The minimal desingularisation of $\Kum(\Jac(H))$ is a simply connected variety of dimension $g$, called a \emph{Jacobian Kummer variety} (see \S\ref{jacobian} for details).

In dimension two, all principally polarised abelian surfaces over $K$ are isomorphic to either the Jacobian of a genus $2$ curve or, up to a quadratic extension, the product of two elliptic curves (see \cite[Theorem 3.1]{Rotger}). The Jacobian case is the generic one, in the sense of forming a Zariski dense open subvariety of the corresponding moduli space. Our main theorem confirms \Cref{conj:CZ} in the Jacobian case. 
\begin{theorem}\label{dim2}
Let $H$ be a smooth, projective genus $2$ curve over $K$.  Then the variety $\Kum(\Jac(H))$ has the Hilbert Property.\footnote{Note that any smooth genus $2$ curve is hyperelliptic.}
\end{theorem}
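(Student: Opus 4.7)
The strategy is to establish the Hilbert Property for $X = \Kum(\Jac(H))$ by exhibiting a dominant family of rational curves on $X$ and applying a general transfer criterion, to be proved in a preliminary section, for the Hilbert Property along such a family. The criterion takes the shape: if a smooth variety $X$ admits a dominant morphism $\pi : \mathcal{R} \to X$ from a variety $\mathcal{R}$ fibred in rational curves via $\rho : \mathcal{R} \to T$ over a base $T$ of Hilbert type, then under a suitable genericity assumption (ensuring that pullbacks of thin covers of $X$ remain non-thin on the $\mathbb{P}^1$-fibres of $\rho$), $X$ inherits the Hilbert Property. The mechanism is that any thin subset $A \subset X(K)$ pulls back to a subset of $\mathcal{R}(K)$ which, when restricted to $\rho^{-1}(t) \cong \mathbb{P}^1$ for $t$ outside a thin subset of $T(K)$, is itself thin, and hence fails to exhaust the fibre by the Hilbert Property of $\mathbb{P}^1_K$.

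For the geometric input on $\Kum(\Jac(H))$, I would first treat the case where $H$ admits a $K$-rational Weierstrass point $\infty$ (odd-degree hyperelliptic model), and handle the even-degree case by an additional argument at the end. In the former case, the Abel--Jacobi embedding $j : H \hookrightarrow J = \Jac(H)$, $P \mapsto [P-\infty]$, intertwines the hyperelliptic involution $\iota$ on $H$ with the involution $-1$ on $J$; consequently, the image of $H$ in $\Kum(J) = J/\langle -1 \rangle$ is isomorphic to $H/\iota \cong \mathbb{P}^1_K$, namely one of the classical tropes on the Kummer surface. To obtain a positive-dimensional family, exploit the birational identification $\Kum(\Jac(H)) \sim \Sym^2(H)/\tau$ with $\tau\{P,Q\}=\{\iota P,\iota Q\}$, which via the symmetric square of the hyperelliptic projection realises $\Kum(\Jac(H))$ birationally as a double cover of $\mathbb{P}^2 = \Sym^2(\mathbb{P}^1)$ branched along a plane sextic $B_H$. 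Curves on $\mathbb{P}^2$ meeting $B_H$ only with even multiplicities then lift to rational curves on $\Kum(\Jac(H))$, and by working in a suitable linear system of such curves one obtains a family $\mathcal{R} \to T$ over a Hilbertian base.

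The critical difficulty is the construction of this moving family: producing a positive-dimensional, $K$-defined, dominant family of rational curves on $\Kum(\Jac(H))$ whose parameter base is of Hilbert type, and verifying the genericity assumption of the criterion. The minimal desingularisation of $\Kum(\Jac(H))$ is a K3 surface of Picard rank at least $17$, so there is no shortage of divisor classes; but singling out classes represented by $K$-rational moving families of rational curves (rather than isolated rational curves or genus-$1$ fibrations), controlling their behaviour under pullbacks of thin sets, and reducing the even-degree genus-$2$ case to the odd-degree arguments, together constitute the technical heart of the proof.
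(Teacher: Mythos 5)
Your transfer criterion, as formulated, cannot apply to $X=\Kum(\Jac(H))$. A dominant morphism $\pi:\mathcal{R}\to X$ from a variety fibred in rational curves over a base $T$ ``of Hilbert type'' forces $T$ to be positive-dimensional and the fibres to map onto rational curves sweeping out a dense subset of $X$; in characteristic zero this makes $X$ uniruled. But the minimal desingularisation of $\Kum(\Jac(H))$ is a K3 surface, which is never uniruled: rational curves on it are rigid, so no moving, $K$-defined, dominant family of rational curves exists. Your double-plane construction reflects this --- a positive-dimensional linear system on $\BP^2$ has only finitely many members meeting the branch sextic everywhere with even multiplicity, so it yields at best countably many rigid rational curves, not a family $\mathcal{R}\to T$. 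What survives of your idea, and what the paper actually does, is to replace the algebraic family by a countable Zariski-dense collection of rational curves, namely the curves $C_n=\pi([n_J](H))$ (images of the multiples of the Abel--Jacobi curve under the Kummer quotient), together with a criterion (\Cref{criterion}) requiring only that, for any finite set of covers $\phi_i:Y_i\to X$ of degree $>1$, a Zariski-dense subfamily of the $C_n$ admits no rational section along any $\phi_i$; the Hilbert Property of each individual rational curve over the Hilbertian field $K$ then produces the required points.

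Even granting a corrected criterion, your proposal defers exactly the steps that constitute the proof: you state that verifying the ``genericity assumption'' (i.e.\ controlling pullbacks of covers to the chosen curves) and treating the case without a $K$-rational Weierstrass point are ``the technical heart'', but give no argument for either. In the paper this heart consists of three Pullback Lemmas after base change along $\pi:J\to X$: covers factoring rationally through $J$ are excluded via non-degeneracy of the generic point of $H$ (\Cref{le:pullbacknongeomint}); covers pulling back to unramified covers of $J$ are handled by surjectivity of $\pi_1(H)\to\pi_1(J)$ together with the coprimality trick restricting to $n$ prime to the degrees (\Cref{prop:pullbackisogenyH}, needed because $H_n$ is singular for $n>1$); and covers pulling back to ramified covers of $J$ are handled by the weak Hilbert Property for abelian varieties of Corvaja--Demeio--Javanpeykar--Lombardo--Zannier, applied not over $K$ but over the function field $K(H)$ so as to control whole curves $C_n$ rather than single points (\Cref{pullbackramified}); one then intersects the resulting finite-index cosets of $n$. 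Finally, the even-degree genus-$2$ case is not reduced to the odd-degree one in your sketch; the paper's route is to prove the statement for the Kummer quotient of the Albanese torsor $\Alb^1(H)$ (working over a finite extension $L$ trivialising the torsor, since absence of rational sections over $L$ implies absence over $K$) and to transfer it to $\Kum(\Jac(H))$ via the Cassels--Flynn birational equivalence (\Cref{desing}) and birational invariance of the Hilbert Property.
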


The study of the Hilbert Property for K3 surfaces was first addressed by Corvaja and Zannier in \cite{Corvaja-Zannier}, and they verified it for the diagonal quartic surface $x^4+y^4=z^4+w^4$ \cite[Theorem 1.6]{Corvaja-Zannier} using multiple elliptic fibrations. The same method was formalised and applied in the recent work of Demeio \cite[Proposition 4.2.1]{Demeio}.  He establishes the Hilbert Property for the Kummer surface $\Kum(E_1\times E_2)$ associated to the product of two elliptic curves $E_1,E_2$ over $K$, under the assumption that rational points on $\Kum(E_1\times E_2)$ are Zariski dense. (Demeio's theorem is stated over a number field and contains the stronger assumption that the elliptic curves have positive Mordell-Weil ranks. In fact, the proof of \cite[Theorem 1.0.2]{Demeio} works for any finitely generated field $K$ of characteristic $0$ and only assumes that the set of rational points is Zariski dense. This is known to hold unconditionally unless $j(E_1)=j(E_2)\in\{0,1728\}$, thanks to the work of Kuwata--Wang \cite{K-W}.) The second author studied in \cite{Huang}  various Kummer surfaces associated to torsors under the product of two elliptic curves, though the Hilbert Property for such surfaces remains still open. More recently, the first author and Mezzedimi \cite{potential} have shown that the Hilbert Property holds for Kummer surfaces if one allows a finite field extension of the base field $K$.

The elliptic fibrations (modulo automophisms) of a generic Jacobian Kummer surface (i.e. $\Jac(H)$ has no extra endomorphisms) over an algebraically closed field have been classified by Kumar in \cite{Kumar}. Unfortunately, in general none of these fibrations descend to $K$ and thus, it is not possible to apply directly the method of \cite{Demeio} over $K$. Instead, the novelty of our proof of Theorem~\ref{dim2} is, amongst other things, the use of rational curves on $\Kum(\Jac(H))$.
To the authors' knowledge, this is the first time that rational curves have been used to prove the Hilbert Property for non-unirational varieties. 

In fact, our approach allows us to settle \Cref{conj:CZ} for a family of higher-dimensional varieties. 
We are not aware of any previous results on the Hilbert Property for non-unirational varieties of dimension $>2$, apart from naive constructions like product varieties. 
\begin{theorem}\label{higher}
Let $H$ be a smooth, projective hyperelliptic curve of genus $g\geq 3$ and of odd degree over $K$, i.e., $H$ is defined by an affine model $y^2=f(x)$ with $f$ separable of degree $2g+1$. 
Then the variety $\Kum(\Jac(H))$ has the Hilbert Property.
\end{theorem}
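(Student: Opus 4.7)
The plan is to exploit the $K$-rational Weierstrass point at infinity, which is guaranteed by the odd-degree hypothesis, in order to produce an infinite family $\{\bar C_n\}_{n \geq 1}$ of $K$-rational rational curves in $\Kum(\Jac(H))$ whose union is Zariski dense, and then to use this family together with the Hilbert Property of $\BP^1_K$ to rule out any thin covering.

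The point at infinity $\infty \in H(K)$ of the affine model $y^2 = f(x)$ is a $K$-rational Weierstrass point by the odd-degree hypothesis. The Abel--Jacobi embedding $\iota: H \hookrightarrow \Jac(H)$ sending $P$ to $[P - \infty]$ is therefore $K$-rational, and satisfies $\iota \circ \sigma = [-1] \circ \iota$ for the hyperelliptic involution $\sigma$, so $\iota(H)$ is $[-1]$-stable. Its projection under the Kummer map $p: \Jac(H) \to \Kum(\Jac(H))$ is a $K$-rational rational curve $\bar C_1 := p(\iota(H)) \cong H/\sigma \cong \BP^1_K$. For each $n \geq 1$, composing with the $K$-isogeny $[n]$ (which commutes with $[-1]$) yields a $K$-rational $[-1]$-stable curve $n \cdot \iota(H)$ and, correspondingly, a $K$-rational rational curve $\bar C_n := p(n \cdot \iota(H)) \cong \BP^1_K$ in $\Kum(\Jac(H))$. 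In the generic case where $\Jac(H)$ is geometrically simple, a Mordell--Lang type argument applied to the hypothetical proper closed subvariety $\bigcap_n [n]^{-1}(V)$ containing $\iota(H)$ forces $\bigcup_{n \geq 1} n \cdot \iota(H)$ to be Zariski dense in $\Jac(H)$, so $\bigcup_n \bar C_n$ is Zariski dense in $\Kum(\Jac(H))$.

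To prove that $\Kum(\Jac(H))(K)$ is not thin, take an arbitrary finite $K$-morphism $f: Y \to \Kum(\Jac(H))$ without rational section. By geometric simple connectedness of $\Kum(\Jac(H))$ together with Zariski--Nagata purity, the branch locus $B$ of $f$ is a non-empty divisor. Restrict $f$ to each $\bar C_n$: if for some $n$ the pullback $Y|_{\bar C_n} \to \bar C_n \cong \BP^1_K$ has no rational section, then by the Hilbert Property of $\BP^1_K$ we obtain a $K$-point of $\bar C_n$ outside $f(Y(K))$, yielding the desired witness $\Kum(\Jac(H))(K) \not\subset f(Y(K))$. In the alternative case, where every $Y|_{\bar C_n}$ admits a rational section $s_n: \bar C_n \to Y$, a rigidity argument applies: the pigeon-hole principle and the finitude of $K$-irreducible components of $Y$ force infinitely many $s_n$ to land in a single $K$-irreducible component $Y' \subseteq Y$; the closure of $\bigcup_n s_n(\bar C_n)$ inside $Y'$ is then an irreducible subvariety of dimension $\dim \Kum(\Jac(H))$, hence equals $Y'$, and one concludes that $f|_{Y'}: Y' \to \Kum(\Jac(H))$ is birational, producing a rational section of $f$ and contradicting the hypothesis.

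The principal obstacle is making the rigidity argument of the last step fully rigorous: deriving a rational section of an irreducible component of $Y$ from the existence of sections over the Zariski dense family $\{\bar C_n\}$. The key input is the intersection-theoretic observation that every $\bar C_n$ meets $B$---since the classes $[\bar C_n]$ are all positive multiples of $[\bar C_1]$ in the Néron--Severi group (via the pushforward formula $[n]_*[\iota(H)] = n^{2g-2}[\iota(H)]$) and $B \cdot \bar C_1 > 0$ for any non-trivial cover---combined with the fact that, when $\bar C_n$ meets $B$, the pullback $Y|_{\bar C_n} \to \bar C_n$ carries non-trivial local monodromy unless it splits off a trivial component. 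A secondary technical issue is to extend the Mordell--Lang-type density argument to cases where $\Jac(H)$ is not geometrically simple, by supplementing the family $\{n \cdot \iota(H)\}_n$ with translates by $K$-rational points of $\Jac(H)(K)$ when these are Zariski dense, or by treating the isogeny factors separately otherwise.
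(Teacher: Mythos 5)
Your construction of the curves $\bar C_n$ and the idea of running the Hilbert Property of $\BP^1$ along a Zariski-dense family of rational curves is indeed the paper's starting point, but the heart of your argument --- the rigidity step --- has a genuine gap, and it is exactly where the paper has to work hardest. From ``every $Y|_{\bar C_n}$ admits a rational section'' you cannot conclude that $f|_{Y'}$ is birational: a finite cover with no rational section can become split over every member of a Zariski-dense family of rational curves (toy example: the double cover of $\BP^2$ branched along a smooth conic splits over every tangent line, and the tangent lines sweep out $\BP^2$). Worse, this phenomenon genuinely occurs here: for $n>1$ the curves $C_n$ are \emph{singular} (this is the oversight in Huybrechts pointed out in the paper), and the pullback along $C_n$ of a cover of $X$ whose base change to $J$ is \'etale can be reducible for infinitely many $n$, splitting off components that are sections. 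Your proposed repair --- ``non-trivial local monodromy unless it splits off a trivial component'' --- is circular, since splitting off a trivial component is precisely what must be excluded, and meeting the branch locus does not exclude it. The paper never proves any such rigidity statement; instead it sorts the covers by their pullback along $\pi\colon J\to X$ into: (1) covers factoring rationally through $J$, killed because $\BP^1$ admits no nonconstant map to an abelian variety; (2) covers whose pullback to $J$ is unramified, killed by surjectivity of $\pi_1(H)\to\pi_1(J)$ \emph{together with} the coprimality trick (restricting to $n$ coprime to the degrees, exactly to dodge the reducibility just described); and (3) covers whose pullback to $J$ is ramified, for which the essential input is the weak Hilbert Property for abelian varieties of Corvaja--Demeio--Javanpeykar--Lombardo--Zannier, applied over the function field $K(H)$. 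Your proposal offers no substitute for this last input, and type (3) covers cannot be handled by intersection numbers with the branch locus alone.

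Two further points. The Hilbert Property demands more than a single witness against a single cover: for every finite family of covers (and every proper closed subset) one needs rational points, Zariski dense in $X$, avoiding all images simultaneously, hence a \emph{dense subfamily} of curves $\bar C_n$ with no rational section along \emph{any} of the covers at once --- this is what \Cref{criterion} organises, and your dichotomy ``some $n$ works, else all $n$ admit sections'' does not deliver it. Also, the density of the relevant subfamilies (indexed by finite-index cosets of $\BZ$, not by all $n$) should not be conditional on $\Jac(H)$ being geometrically simple: it holds unconditionally because the generic point of $H$ is non-degenerate in $J_{K(H)}$, as $H$ generates $J$ (\Cref{le:nondegenerate}, \Cref{le:densefiniteindexcoset}); and the pushforward multiplier is $[H_n]=n^2[H]$, not $n^{2g-2}$. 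Your reduction via the $K$-rational Weierstrass point at infinity is the same move the paper makes when deducing \Cref{higher} from \Cref{alb}, but the substance of the proof lies in the cover-by-cover analysis sketched above, which your proposal does not reach.
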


\subsection{Strategy and outline}
In order to reduce the \emph{a priori} unknown Hil\-bert Property of a higher-dimensional variety $X$ to the known Hilbert Property of (possibly singular) rational subcurves on $X$, their behaviour under pullbacks along covers of $X$ has to be understood. This is the criterion of \Cref{criterion} which we formulate in a way tailored to our application to Jacobian Kummer varieties.

Let $\Alb^1(H)$ be the (first) Albanese torsor of the hyperelliptic curve $H$, and let $\Kum(\Alb^1(H))$ be the associated Kummer quotient, i.e. the quotient of $\Alb^1(H)$ by the natural involution. See \S\ref{se:alb} for details. In general $\Kum(\Alb^1(H))$  is a non-isomorphic Galois twist of $\Kum(\Jac(H))$.
In practice, it turns out that working with $\Kum(\Alb^1(H))$ instead of $\Kum(\Jac(H))$ is crucial for our purposes. The most general form of our main theorem is the following, from which Theorems \ref{dim2} and \ref{higher} follow as special cases.
\begin{theorem}\label{alb}
Let $H$ be a smooth, projective hyperelliptic curve over $K$.
 Then the variety $\Kum(\Alb^1(H))$ has the Hilbert Property.
\end{theorem}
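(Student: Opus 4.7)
The plan is to apply \Cref{criterion}, which reduces the Hilbert Property of $X = \Kum(\Alb^1(H))$ to a condition on a family of rational subcurves. The decisive $K$-rational input is the Abel--Jacobi embedding
\[
H \hookrightarrow \Alb^1(H), \qquad P \mapsto [P],
\]
which is defined over $K$ without any choice of base point because $\Alb^1(H) = \Pic^1_{H/K}$. The Kummer involution on $\Alb^1(H)$ acts on divisor classes as pushforward along the hyperelliptic involution $\iota$, so it restricts to $\iota$ on the image of $H$. Taking the quotient thus produces a $K$-rational morphism
\[
\BP^1_K \cong H/\iota \longrightarrow \Kum(\Alb^1(H)),
\]
birational onto its image, and hence a canonical $K$-rational rational subcurve $R_0 \subset X$.

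To spread $R_0$ into a Galois-stable family as required by \Cref{criterion}, I would use the elementary computation $\iota \circ T_a = T_{-a} \circ \iota$ on $\Alb^1(H)$, which shows that translation by $a \in \Jac(H)$ commutes with the Kummer involution precisely when $a$ is $2$-torsion. Consequently $\Jac(H)[2]$ descends to a group of automorphisms of $\Kum(\Alb^1(H))$, and the orbit of $R_0$ yields a $\Gal(\ov{K}/K)$-stable collection $\{R_\alpha\}_{\alpha \in \Jac(H)[2](\ov{K})}$ of up to $2^{2g}$ rational curves on $X$, whose Galois orbits are defined over $K$.

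The technical heart of the argument, and the main obstacle, is to verify the hypothesis of \Cref{criterion} for this family. Given a finite cover $\pi : Y \to X$ with no $K$-rational section, one needs to locate a Galois orbit of some $R_\alpha$ such that the base change of $\pi$ still has no section over it; the Hilbertianness of $\BP^1_K$ would then supply the required $K$-points of $X$ outside $\pi(Y(K))$. The delicate case is when $\pi$ trivialises over every single $R_\alpha$. Here I expect to exploit the $\Jac(H)[2]$-equivariance together with the specific intersection combinatorics of the $R_\alpha$ (the classical $(16,6)$-configuration for $g=2$ and its higher-dimensional analogues) to glue the local sections of $\pi$ over the $R_\alpha$ into a global rational section of $\pi$, contradicting the choice of $\pi$. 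Packaging this descent/gluing step into a usable criterion, rather than verifying it ad hoc for each cover, is precisely the role of \Cref{criterion}; once that criterion is in place, the rest of the argument reduces to the structural observations outlined above.
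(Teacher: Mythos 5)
There is a genuine gap, and it is structural rather than cosmetic. Your family of curves consists of the trope $R_0$ together with its translates by $\Jac(H)[2]$, i.e.\ at most $2^{2g}$ curves in total. But $X=\Kum(\Alb^1(H))$ has dimension $g\geq 2$, so the union of any finite collection of curves is a proper Zariski closed subset and can never be Zariski dense; the hypothesis of \Cref{criterion} therefore cannot be verified with this family, no matter how the covers behave. This is exactly why the paper works with the \emph{infinite} family $C^1_n=[n_{X^1}](C^1)$ for odd $n$ (images of $H$ under odd multiplication on the Albanese torsor), whose Zariski density over every finite index coset of $\BZ$ is guaranteed by the non-degeneracy of the generic point of $H$ in its Jacobian (\Cref{le:nondegenerate}, \Cref{le:densefiniteindexcoset}). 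A secondary problem with your family: the individual translates $R_\alpha$, $\alpha\in\Jac(H)[2](\ov K)$, are the tropes and are in general \emph{not} defined over $K$ (the paper notes this is precisely the obstruction to $X\cong X^1$ in genus $2$); \Cref{criterion} needs curves that are $K$-rational, and replacing a curve by its Galois orbit does not let you invoke the Hilbert Property of $\BP^1_K$ on it.

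The second gap is in how you propose to handle covers: the suggestion that if a cover $\phi:Y\to X$ acquires sections over every $R_\alpha$ one could ``glue'' these into a global rational section of $\phi$ is not a valid step. A cover of a variety of dimension $\geq 2$ can be split over each member of a fixed one-dimensional configuration without being globally split, and no equivariance or intersection combinatorics of the $(16,6)$-configuration forces such a descent. The actual difficulty, which your outline does not engage with, is to control the pullbacks of the chosen rational curves along three different kinds of covers: those factoring rationally through $\Alb^1(H)$ (\Cref{le:pullbacknongeomint}), those pulling back to unramified covers of the abelian variety (handled by the coprimality trick, \Cref{prop:pullbackisogenyH}), and those pulling back to ramified covers, for which the paper crucially invokes the weak Hilbert Property for abelian varieties of Corvaja--Demeio--Javanpeykar--Lombardo--Zannier, applied over the function field $K(H)$ (\Cref{thm:CDJLZ1}, \Cref{pullbackramified}). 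Without an input of this strength for ramified covers, and without an infinite, provably dense family of $K$-rational curves indexed so that one can pass to finite index cosets compatible with all three arguments simultaneously, the proposed strategy does not yield the theorem.
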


The variety $\Kum(\Alb^1(H))$ always possesses a countable family of rational curves, whose union is Zariski dense. To show that the conditions in the criterion of \Cref{criterion} are satisfied for $\Kum(\Alb^1(H))$, we prove Pullback Lemmas that deal with the three occurring types of covers of $\Kum(\Alb^1(H))$:
\begin{enumerate}
    \item covers that factor rationally through $\Alb^1(H)$;
    \item covers that arise as Kummer quotients of étale covers of $\Alb^1(H)$;
    \item covers that pull back to ramified covers of $\Alb^1(H)$.
\end{enumerate}
While type (1) covers are relatively easy to treat, for the other types we rely on two separate methods: Type (2) covers are dealt with through a fundamental group argument supplemented by a \emph{coprimality trick}.  As for type (3) covers, we make use of the recently established \emph{weak Hilbert Property for abelian varieties} due to Corvaja, Demeio, Javanpeykar, Lombardo and Zannier \cite{CDJLZ}, which guarantees the existence of many rational points avoiding these covers. To control the pullbacks of rational curves on $\Kum(\Alb^1(H))$, we apply this result not to $\Alb^1(H)$ itself but instead to the base change of $\Alb^1(H)$ to the function field of $H$.

Finally, for any given finite collection of these covers, we need to carefully match these methods together, so as to extract a subfamily of rational curves which simultaneously have no rational sections along any one of them. Thus \emph{a fortiori} these curves contribute to many rational points not coming from these covers, thereby achieving the Hilbert Property for $\Kum(\Alb^1(H))$. The deduction of the Hilbert Property for $\Kum(\Jac(H))$ in the case $g=2$ is based on the fact (due to Cassels and Flynn, see \Cref{desing}) that $\Kum(\Alb^1(H))$ and $\Kum(\Jac(H))$ are birational to each other over $K$.
However, in light of the lack of an analogue of \Cref{{desing}} for $g>2$, we cannot extend \Cref{higher} to the setting where $H$ has no $K$-rational Weierstrass point (or slightly weaker, when $H$ has no zero-cycle of degree $1$ fixed under $\tau_H$ -- the existence of such a zero-cycle is for example guaranteed if $f$ factors into polynomials of coprime degrees). It would be interesting to remove the assumption on the degree of $f$ in future investigations.

The article is structured as follows. After notation and background in \S\ref{sec:notation}, in \S\ref{sec:curves} we construct and prove properties of rational curves on $\Kum(\Jac(H))$ and $\Kum(\Alb^1(H))$. In \S\ref{sec:pullback} the crucial Pullback Lemmas are formulated for covers of types (1)-(3). These are applied in \S\ref{sec:proofs} to prove the main theorems.

\section{Notation, conventions and preliminaries}\label{sec:notation}

A \emph{variety} over $K$ is a separated reduced $K$-scheme of dimension $>0$ and of finite type. To simplify exposition, throughout the rest of this article all varieties are supposed to be integral. We write $K(X)$ for the function field of a $K$-variety $X$.

Let $\phi:Y\to X$ be a morphism of varieties and let $X'\subseteq X$ be a subvariety. We say that $X'$ \emph{has a rational section along $\phi$}, if the morphism $\phi|_{\phi^{-1}(X')}:\phi^{-1}(X')\to X'$ has a rational section, i.e.\ there exists an open dense subset $U^\prime\subset X^\prime$ and $\psi:U^\prime\to Y$ such that $\phi\circ\psi$ is the identity map on $U^\prime$.

A $K$-variety birationally equivalent to $\BP^n_K$ is called $K$-\emph{rational}. By a \emph{rational curve} on a $K$-variety $X$, we mean a closed (possibly singular) subvariety on $X$ which is birationally equivalent to $\BP^1_K$. By a \emph{hyperelliptic curve} over $K$, we mean a smooth, projective curve over $K$ with a finite morphism to $\BP^1_K$ of degree $2$. (Note that a curve may be geometrically hyperelliptic but not hyperelliptic over $K$, the image of the canonical map being a conic without a $K$-rational point. This does not happen when $g$ is even.)

Assume that a $K$-variety $V$ is equipped with an involution $\tau:V\to V$, usually taken to be implicit from the context. Then we define the \emph{Kummer quotient} $\Kum(V)$ as the (possibly singular) quotient of $V$ by the $\BZ/2\BZ$-action for which the group generator acts by $\tau$.

Let $(G,+)$ be an abelian group. A \emph{coset} $\CC$ of $G$ is of the form $g+H$ where $g\in G$ and $H\subset G$ is subgroup.

Our next proposition says that proving the Hilbert Property boils down to a specific family of maps.
\begin{proposition}\label{prop:thincoverequiv}
 Let $X$ be a quasi-projective variety over a field $K$. Then a subset $M\subset X(K)$ is thin if and only if 
there exists a finite collection $(\phi_i:Y_i\to X)$, where $Y_i$ are normal and geometrically integral varieties over $K$, and $\phi_i$ are finite surjective $K$-morphisms of degree $[K(Y_i):K(X)]>1$, such that $M\setminus\bigcup_{i=1}^r \phi_i(Y_i(K))$ is not Zariski dense.
\end{proposition}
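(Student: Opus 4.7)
The plan is to prove the two implications separately; the forward direction is the main substance. For the easy direction $(\Leftarrow)$, assume such a finite collection $(\phi_i:Y_i\to X)_{i=1}^r$ exists and let $Z$ denote the Zariski closure of $M\setminus\bigcup_i\phi_i(Y_i(K))$, which is by hypothesis a proper closed subvariety of $X$. I would form the disjoint union $Y:=Z\sqcup\bigsqcup_i Y_i$ equipped with the obvious $K$-morphism $f:Y\to X$. Then $M\subseteq f(Y(K))$, the fibre of $f$ over the generic point of $X$ equals $\bigsqcup_i\Spec K(Y_i)$ and is in particular finite, and $f$ has no rational section: any such section would have image in a single irreducible component of $Y$ dominating $X$, yet $Z$ does not dominate $X$ and each extension $K(Y_i)/K(X)$ has degree strictly larger than one.

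For the forward direction $(\Rightarrow)$, I would start from a morphism $f:Y\to X$ witnessing the thinness of $M$, decompose $Y$ into its irreducible components $\{Y_{(j)}\}_j$, and analyse each component. If $Y_{(j)}$ does not dominate $X$, then $f(Y_{(j)}(K))$ lies in the proper closed subvariety $f(Y_{(j)})\subsetneq X$ and can be discarded. If $Y_{(j)}$ dominates $X$ but is not geometrically integral, let $K'$ denote the algebraic closure of $K$ inside $K(Y_{(j)})$, so that $[K':K]>1$. The local rings of $Y_{(j)}$ at normal points, being integrally closed in $K(Y_{(j)})$ and containing $K$, must contain $K'$; hence a $K$-rational point on the normal locus would induce a $K$-algebra homomorphism $K'\to K$, impossible for a nontrivial field extension. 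Therefore $Y_{(j)}(K)$ lies in the non-normal locus, a proper closed subvariety, and $f(Y_{(j)}(K))$ is again contained in a proper closed subvariety of $X$.

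The remaining case, in which $Y_{(j)}$ dominates $X$ and is geometrically integral, is the one that contributes to the desired collection. Here $[K(Y_{(j)}):K(X)]>1$, since otherwise the identity on $K(X)=K(Y_{(j)})$ provides a rational section of $f|_{Y_{(j)}}$, contradicting thinness. I would let $\phi_j:\tilde Y_j\to X$ be the normalisation of $X$ in $K(Y_{(j)})$: this is finite surjective of degree $>1$, $\tilde Y_j$ is normal, and it is geometrically integral because its function field $K(Y_{(j)})$ is a regular extension of $K$. Since $Y_{(j)}$ and $\tilde Y_j$ are birational over $X$, a dense open $U_j\subseteq Y_{(j)}$ is isomorphic over $X$ to a dense open of $\tilde Y_j$, so every $K$-point of $U_j$ lifts to a $K$-point of $\tilde Y_j$ and $f(Y_{(j)}(K))\setminus\phi_j(\tilde Y_j(K))\subseteq f(Y_{(j)}\setminus U_j)$ is again a proper closed subvariety. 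Collecting the $\phi_j$ obtained from all such components yields the desired finite collection, since $M\setminus\bigcup_j\phi_j(\tilde Y_j(K))$ is then contained in a finite union of proper closed subvarieties of $X$ and hence not Zariski dense.

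The principal technical delicacy is the middle case, which requires controlling the $K$-points of an integral but not geometrically integral variety. This is what forces the statement of the proposition to insist on geometrically integral $Y_i$, and it is resolved via the integral closure argument above.
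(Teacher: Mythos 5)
Your argument is correct, and in the forward direction it takes a recognisably different route from the paper. The paper reduces to a single integral $Y$ dominating $X$ with $[K(Y):K(X)]>1$, then produces the finite cover geometrically: it invokes Nagata compactification to extend $\phi$ to a proper morphism over a projective model $\overline X$, applies Stein factorisation to get a finite $W\to\overline X$, normalises $W$, and disposes of the non--geometrically-integral case by citing Serre's lemma that a normal variety with a $K$-point is geometrically integral. You instead work component by component, prove the Serre-type fact directly (the algebraic closure $K'$ of $K$ in $K(Y_{(j)})$ sits inside every normal local ring, so a $K$-point in the normal locus forces $K'=K$), and construct each cover purely algebraically as the normalisation of $X$ in the finite extension $K(Y_{(j)})/K(X)$, using finiteness of integral closure for varieties; this avoids compactification and Stein factorisation entirely and is arguably more self-contained, while in substance the two constructions yield the same object (the normalisation of $X$ in $K(Y)$ is the normalisation of the Stein factor). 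Both proofs then conclude the same way, comparing $K$-points through a birational identification over $X$ and discarding a set of smaller dimension. Two small points of hygiene: sets like $f(Y_{(j)})$ and $f(Y_{(j)}\setminus U_j)$ are only constructible, so you should say ``contained in a proper closed subset'' rather than ``is a proper closed subvariety''; and your treatment of the middle case really shows that $K$-points avoid the normal locus when $K$ is \emph{not algebraically closed} in $K(Y_{(j)})$, which in characteristic $p$ does not cover a geometrically irreducible but geometrically non-reduced component (where $K'=K$). Since the ambient paper works over fields of characteristic $0$, where ``not geometrically integral'' is equivalent to $K'\neq K$ for an integral variety, this is immaterial here, and the same caveat attaches to the paper's own appeal to Serre; if you want the statement over an arbitrary field, replace the normal locus by the regular locus and note that a regular $K$-point forces geometric reducedness via the Cohen structure theorem.
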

\begin{proof}
Upon considering the morphism from the variety $Y$, as the disjoint union of $Y_i$ with a Zariski closed subset of $X$, to $X$, the ``if'' part follows just from definition.

We now prove the ``only if'' part. Namely, for every thin set $M\subset X(K)$, we want to construct such a finite collection $\phi_i:Y_i\to X$ as above such that $M\setminus (\bigcup_{i=1}^r \phi_i(Y_i(K)))$ is contained in a proper Zariski closed subset. Assume that $M\subset f(Z(K))$, where $Z$ is a $K$-variety and the morphism $f$ is generically finite and has no rational section. Upon considering the irreducible components of $Z$, it suffices to assume that 
 $M=\phi(Y(K))$, where $Y$ is an integral $K$-variety with $\dim Y=\dim X$, and $\phi:Y\to X$ is a dominant $K$-morphism with degree $[K(Y):K(X)]>1$.
Let us fix a projective model $\overline{X}$ of $X$ in what follows.
 
 By the Nagata compactification theorem (see \cite[Theorem 3.2]{Lutkebohmert}), $\phi$ extends to a proper morphism $\overline{\phi}:\overline{Y}\to\overline{X}$ where $\overline{Y}$ is a proper model of $Y$. Using the Stein factorization (\cite[III. Corollary 11.5]{Hartshorne}), we can decompose $\overline{\phi}=g_1\circ g_2:\overline{Y}\to W\to \overline{X}$, where $g_1:\overline{Y}\to W$ is proper with connected fibres and $g_2:W\to \overline{X}$ is finite. It follows that $g_1$ is birational, and $g_2$ is proper by \cite[Exercise II 4.1]{Hartshorne}) and hence it is surjective (since $\phi$ is dominant). Let $Z$ denotes the non-normal locus of $W$ and let $h:\widehat{W}\to W$ be the normalisation, which is finite birational. We may assume that $\widehat{W}(K)\neq \varnothing$ and hence $\widehat{W}$ is geometrically integral (see \cite[p. 20]{Serre}). Otherwise $M$ is contained in the Zariski closed subset $g_2(Z)$.  We let $\widetilde{W}= \widehat{W}\setminus (h\circ g_2)^{-1}(\overline{X}\setminus X)$. We claim that the morphism $h\circ g_2|_{\widetilde{W}}:\widetilde{W}\to X$ meets our need. Indeed, $h\circ g_2$ is finite surjective of degree $[K(W):K(X)]=[K(Y):K(X)]>1$, and $M\setminus((h\circ g_2)(\widetilde{W}(K)))$ is contained in the Zariski closed subset $g_2(Z)$. 
\end{proof}
The proposition above motivates the following convention.

A \emph{cover} $\phi:Y\to X$ between $K$-varieties is a finite surjective morphism with $Y$ being normal and geometrically integral. The \emph{degree} of $\phi$ is $[K(Y):K(X)]$.

Therefore \Cref{prop:thincoverequiv} may be rephrased as follows: \begin{corollary}\label{co:thincoverequiv}
Let $X$ be a quasi-projective variety over a Hilbertian field $K$. Then $X$ has the Hilbert Property over $K$ if and only if for any finite family of covers $\phi_i\colon Y_i\to X$, $i=1,\dots,r$ with $\deg(\phi_i)>1$,  $X(K)\setminus\bigcup_{i=1}^r \phi_i(Y_i(K))$ is Zariski dense in $X$.\end{corollary}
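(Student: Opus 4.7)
The plan is to deduce this corollary as a direct logical consequence of \Cref{prop:thincoverequiv} applied to the set $M = X(K)$, together with \Cref{def:HP}. By definition, $X$ has the Hilbert Property precisely when $X(K)$ is not a thin subset of $X(K)$; so it suffices to characterise when $X(K)$ fails to be thin.

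For this I would apply the contrapositive of \Cref{prop:thincoverequiv} with $M=X(K)$. Negating the equivalence stated there gives: $X(K)$ is \emph{not} thin if and only if for every finite collection of covers $\phi_i\colon Y_i\to X$ with $[K(Y_i):K(X)]>1$, the complement $X(K)\setminus\bigcup_{i=1}^r \phi_i(Y_i(K))$ \emph{is} Zariski dense in $X$. This is precisely the statement of the corollary. The only care required is to flip both the existential quantifier over finite families and the Zariski density condition simultaneously.

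Since the corollary is a purely logical reformulation of the preceding proposition, I do not foresee any real obstacle: the substantive input, namely the use of Nagata compactification and Stein factorisation to manufacture the required covers from an arbitrary thin witness, has already been absorbed into \Cref{prop:thincoverequiv}. The Hilbertian hypothesis on $K$ is not strictly needed for the logical equivalence itself, but matches the paper's standing assumption, under which the Hilbert Property for a given $X$ is a substantive condition (it ensures, for instance, that $\BP^1_K$ is itself of Hilbert type, so that the corollary is not vacuous when specialised to rational varieties).
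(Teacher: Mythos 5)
Your proposal is correct and matches the paper's approach: the paper gives no separate argument for \Cref{co:thincoverequiv}, presenting it simply as a rephrasing of \Cref{prop:thincoverequiv} applied to $M=X(K)$, which is exactly the quantifier-negation you carry out. Your remark that the Hilbertian hypothesis plays no role in this purely logical equivalence is also accurate.
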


\section{Rational curves on Jacobian Kummer varieties}\label{sec:curves}

Let $H$ be a hyperelliptic curve of genus $g$ over $K$. Recall the following classical facts about $H$, its Jacobian and Albanese varieties, and their Kummer quotients.

The hyperelliptic involution $\tau_H:H\to H$ is defined over $K$. The Kummer quotient $C=\Kum(H)$ is non-canonically isomorphic to $\BP^1_K$, and we write  $\pi_H:H\to C$ for the quotient morphism. The set $W\subset H(\ov K)$ of Weierstrass points of $H$, i.e.\ the fixed points under $\tau_H$, forms a zero-dimensional reduced $K$-subscheme of degree $2g+2$. 

\subsection{The Jacobian variety and rational curves on its Kummer quotient}\label{jacobian}
The Jacobian $J=\Jac(H)$ is a principally polarised abelian variety over $K$, even when $H(K)=\emptyset$. See e.g. \cite[Theorem 1.1, Summary 6.11]{Milne2}.  
For $n\in\BZ$, write $[n_J]:J\to J$ for the multiplication-by-$n$ morphism, and for a closed point $x\in J$, write $t_x:J\to J$ for the translation-by-$x$ morphism. The fixed points of the involution $\tau_J=[(-1)_J]$ consist precisely of the $2$-torsion $J[2]$, which is a zero-dimensional subgroup scheme of degree $2^{2g}$. Let $X=\Kum(J)$ and $\pi:J\to X$ be the quotient morphism. Clearly the morphisms $[n_J]$ and $\tau_J$ commute, so $[n_J]$ descends to $X$ and we denote the resulting morphism by $[n_X]$. The morphism $\pi$ is étale outside the images of $J[2]$, which form the singular locus of $X$.
Blowing up these $2^{2g}$ canonical singularities gives a minimal resolution $\tX\to X$ where $\tX$ is known as the \emph{Kummer variety} of $J$. It is a smooth, projective and simply connected variety (see \cite[Theorem 1]{Spanier}).

For every $x_0\in H(K)$, there exists an Abel--Jacobi closed $K$-embedding
\begin{align*}
     \iota_{x_0}: H&\hookrightarrow J,\\x &\mapsto\langle x_0-x\rangle, 
\end{align*}
 where $\langle \cdot\rangle $ denotes the class of a divisor. By the Riemann-Roch theorem, the closed points on $H$ generate $J$ (see \cite[IV Remark 4.10.9]{Hartshorne} or \cite[\S5]{Milne2}).

We assume throughout the rest of \S\ref{jacobian} that $H$ has a $K$-rational Weierstrass point $\infty$. 
We consider $H$ as a subvariety of $J$ via the Abel-Jacobi embedding $\iota_\infty$. Since the restriction of the map $\tau_J$ to $H$ is just $\tau_H$, the image of $H$ under $\pi$ is a smooth rational curve, which we denote by $C$. For every $n\in\BZ_{>0}$, let $H_n=[n_J](H)$, the image of $H$ under the map $[n_J]$, and let $C_n=\pi(H_n)$. The curves $C_n$ are rational for all $n\in\BZ_{>0}$. (See e.g. \cite[Chapter 13, Example 0.4]{Huybrechts}.)

\begin{lemma}\label{rational-curves}
The family $(H_n)_{n\in\BN}$ consists of pairwise distinct curves. Consequently, $(C_n)_{n\in\BN}$ are pairwise distinct rational curves on $X$. 
\end{lemma}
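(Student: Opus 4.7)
The plan is to compute the cohomology class $[H_n] \in H^{2g-2}(J, \BQ)$ and show it depends strictly on $n$, which immediately implies the pairwise distinctness of the $H_n$. Set $g_n := [n_J] \circ \iota_\infty \colon H \to H_n$ and let $d_n$ denote its degree. Since $[n_J]^*$ acts on $H^k(J, \BQ)$ by $n^k$ and $[n_J]_* \circ [n_J]^* = n^{2g}\cdot\id$, the push-forward $[n_J]_*$ acts as multiplication by $n^2$ on $H^{2g-2}(J, \BQ)$; therefore $d_n\,[H_n] = [n_J]_*[H] = n^2[H]$, i.e.\ $[H_n] = (n^2/d_n)[H]$. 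Since $[H] = [\Theta]^{g-1}/(g-1)!$ by Poincar\'e's formula is non-zero, it suffices to show that $d_n = 1$ for every $n \geq 1$: the equality $H_n = H_m$ would then force $n^2 = m^2$, hence $n = m$.

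I would reduce the equality $d_n = 1$ to the triviality of the translation stabiliser $K := \{t \in J : t + \iota_\infty(H) = \iota_\infty(H)\}$. Indeed, for generic $x \in H$ the fibre $g_n^{-1}(g_n(x)) = \iota_\infty^{-1}(\iota_\infty(x) + J[n])$ is in bijection with $\{t \in J[n] : \iota_\infty(x) + t \in \iota_\infty(H)\}$: every element of $K \cap J[n]$ contributes, whereas $t \in J[n] \setminus K$ contributes only for $x$ lying in the proper closed subvariety $\iota_\infty(H) \cap (\iota_\infty(H) - t)$. Hence $d_n = |K \cap J[n]|$, which equals $1$ as soon as $K = \{0\}$.

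The main obstacle is therefore proving $K = \{0\}$. Any $t \in K$ induces an automorphism $\sigma_t \colon H \to H$ satisfying $\iota_\infty(\sigma_t(x)) = \iota_\infty(x) + t$, realising $t$ as a translation on $J$ preserving $\iota_\infty(H)$. Since translations on $J$ have identity differential in the left-invariant frame, $\sigma_t$ preserves the Gauss map $\gamma \colon H \to \BP(T_0 J) = \BP^{g-1}$ of $\iota_\infty(H)$. For hyperelliptic $H$ of genus $g \geq 2$, $\gamma$ coincides with the canonical map, which factors as $H \xrightarrow{\pi_H} \BP^1 \hookrightarrow \BP^{g-1}$ onto a rational normal curve of degree $g-1$, and in particular is $2$-to-$1$ with fibres $\{x, \tau_H(x)\}$. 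Hence $\sigma_t(x) \in \{x, \tau_H(x)\}$ for every $x \in H$, and the irreducibility of $H$ forces $\sigma_t = \id_H$ (whence $t = 0$) or $\sigma_t = \tau_H$ (whence $t = \iota_\infty(\tau_H(x)) - \iota_\infty(x) = -2\iota_\infty(x)$ for every $x$, impossible since $\iota_\infty$ is non-constant). This yields $K = \{0\}$, completing the proof. The distinctness of the $C_n \subset X$ then follows at once from $\tau_J(H_n) = H_n$, which gives $\pi^{-1}(C_n) = H_n$.
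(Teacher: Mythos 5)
Your proof is correct, but it takes a genuinely different route from the paper's. You compute $[n_J]_*[H]=n^2[H]$ in $H^{2g-2}(J,\BQ)$, so that distinctness of the $H_n$ reduces to showing $d_n=\deg(H\to H_n)=1$, and you obtain this from the triviality of the translation stabiliser $K$ of $\iota_\infty(H)$, proved via the Gauss map of the Abel--Jacobi embedding, which for a hyperelliptic curve is the canonical map, $2$-to-$1$ with fibres the $\tau_H$-orbits; all these steps check out (the fibre count $d_n=|K\cap J[n]|$, the identity $\iota_\infty(\tau_H(x))=-\iota_\infty(x)$ using that $\infty$ is a Weierstrass point, and the final descent to the $C_n$ via $\tau_J(H_n)=H_n$). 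The paper instead argues directly: if $H_m=H_n$ with $m<n$, the restrictions $[m_J]|_H$ and $[n_J]|_H$ differ by an automorphism of the curve, which has finite order since $g\geq 2$, and one reduces to $[m_J]|_H=[n_J]|_H$, forcing $H\subset\Ker([(n-m)_J])$ and contradicting the fact that the closed points of $H$ generate $J$ (the same non-degeneracy input that reappears in \Cref{le:nondegenerate}). The trade-off: the paper's argument is a few lines and uses no intersection theory, whereas yours is longer but gives more, namely the exact class $[H_n]=n^2[H]$ and the birationality of $[n_J]|_H:H\to H_n$ for every $n$, which is exactly the generic injectivity implicit in the paper's remark describing the singular locus of $H_n$ through torsion packages. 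Two small points of hygiene: over a finitely generated field of characteristic $0$ you should either work with $\ell$-adic cohomology over $\ov K$ or embed into $\BC$, noting that distinctness over $\ov K$ implies distinctness over $K$; and your use of the Gauss map needs $\iota_\infty(H)$ smooth, which holds because the Abel--Jacobi map is a closed embedding for $g\geq 1$.
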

\begin{proof}
 Assume $H_m=H_n$ for some $0<m<n$. The $m^{2g}$ translates $t_x(H)$, $x\in J[m]$, are pairwise distinct. Since $[m_J]$ is étale of degree $m^{2g}$, it follows that $[m_J]|_H:H\to H_m$ is of degree $1$, i.e.\ a birational map. The same holds for $[n_J]|_H$. Thus there is an automorphism $\sigma$ of $H$ such that as rational maps
 \[\sigma=[m_J]|_H\circ([n_J]|_H)^{-1}=([n_J]|_H)^{-1}\circ[m_J]_H.\]
 Because $\sigma$ has finite order, after replacing $m$ and $n$ by powers thereof, we can assume $[m_J]_H=[n_J]_H$.
 It follows that $H\subset\Ker([(n-m)_J])\subset J$, which is impossible since the closed points of $H$ generate $J$.
\end{proof}

\begin{remark}
 We would like to point out an oversight in \cite[p. 274]{Huybrechts}, where it is stated that when $g=2$, the rational curve $C_n$ is smooth for all $n$. In fact this is false for $n>1$ and it is for this reason that we will need the coprimality argument in \Cref{prop:pullbackisogenyH}. If $C_n$ were smooth, then since it is an ample geometrically irreducible divisor of $J$, its pullback along $[n_J]$ would also be geometrically irreducible by \cite[Corollary 4.3.4]{Birkenhake-Lange}. But this is clearly not the case since it is (geometrically) a sum of translates of $C$. See also \Cref{rmk:pullbacklemma} below.
 
 More concretely, let us take a look at the curve $C_2$. Since $H$ contains $6$ points of $J[2]$, then $H_2=[2_J](H)$ passes through $0\in J$ with multiplicity at least $6$. Hence $C_2$ has multiplicity as least $3$ at $\pi(0)$. In general, the singular locus of $H_n$ (resp.\ $C_n$) is the closed subscheme of all points $[n_J]x$ (resp.\ $[n_X]\pi(x)$), $x\in H$, such that there exists $y\in H$ different from $x$ with $[n_J](x-y)=0$. In other words, $x$ is in a \emph{torsion package} of size $>1$ in the sense of \cite{Baker-Poonen}. We omit the proof since this fact will not be needed.
\end{remark}

\begin{lemma}\label{le:nondegenerate}
Let $\eta_H\in J_{K(H)}$ be the image of the generic point of $H$ under the embedding $\iota_\infty:H\hookrightarrow J$. 
Then $\eta_H$ is non-degenerate in $J_{K(H)}$, i.e.\ $\BZ \eta_H$ is Zariski dense in $J_{K(H)}$.
\end{lemma}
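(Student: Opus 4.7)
The plan is to argue by contradiction. Suppose that $\BZ\eta_H$ is not Zariski dense in $J_{K(H)}$, and let $A$ be the identity component of its Zariski closure. Since the Zariski closure of a subgroup of $J(K(H))$ is automatically a closed subgroup scheme, $A$ is a closed abelian subvariety of $J_{K(H)}$; it is a proper subvariety because otherwise the closure would already be all of $J_{K(H)}$. Moreover the quotient of the closure by $A$ is a finite group scheme, so there is a positive integer $n$ with $n\eta_H \in A(K(H))$. The first concrete task is to spread $A$ out over $H$: applying generic flatness to the schematic closure of $A$ inside $J \times H$, we obtain a non-empty open $U \subseteq H$ and a closed subgroup scheme $\CA \hookrightarrow J \times U$ flat over $U$, whose generic fiber is $A$, and (after possibly shrinking $U$) each of whose geometric fibers is an abelian subvariety of $J$ of dimension $\dim A$. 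By construction, $[n_J]\iota_\infty(x) \in \CA_x$ for every $x \in U$.

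The second step is a rigidity input. After base-changing to $\ov K$, the family $\CA_{\ov K} \to U_{\ov K}$ is a family of abelian subvarieties of the constant abelian scheme $J_{\ov K} \times U_{\ov K}$, and such a family is necessarily constant. This is a consequence of the Chow $\ov{K(H)}/\ov K$-trace (equivalently, it follows from the discreteness of the set of abelian subvarieties of $J_{\ov K}$ of given dimension inside the Hilbert scheme of $J_{\ov K}$, a rigidity which in characteristic zero reduces to the classification of rational sub-Hodge structures of $\RH_1(J)$). Hence there exists a fixed proper abelian subvariety $A_0 \subsetneq J_{\ov K}$ with $\CA_{\ov K} = A_0 \times U_{\ov K}$, so that $[n_J]\iota_\infty(x) \in A_0$ for all $x \in U(\ov K)$, and by closedness of $A_0$ also for all $x \in H(\ov K)$.

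The contradiction now comes from the generation property of the Abel-Jacobi embedding recalled above. The closed points of $H$ generate $J$ as an algebraic group, so the smallest algebraic subgroup of $J_{\ov K}$ containing $\iota_\infty(H_{\ov K})$ is $J_{\ov K}$ itself. For any algebraic subgroup $G \subseteq J_{\ov K}$ containing $[n_J]\iota_\infty(H_{\ov K})$, the preimage $[n_J]^{-1}(G)$ is an algebraic subgroup containing $\iota_\infty(H_{\ov K})$, forcing $[n_J]^{-1}(G) = J_{\ov K}$; by surjectivity of $[n_J]$ this in turn yields $G = J_{\ov K}$. Applied to $G = A_0$ one obtains $A_0 = J_{\ov K}$, contradicting $A_0 \subsetneq J_{\ov K}$. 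The main obstacle in this plan is the rigidity step, which we expect to handle by invoking the Chow trace as a black box rather than working directly with Hilbert or Chow schemes.
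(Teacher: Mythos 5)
Your proof is correct, but it implements the key step differently from the paper. Both arguments share the same skeleton: assume a positive multiple of $\eta_H$ lies in a proper abelian subvariety of $J_{K(H)}$, use a rigidity statement for the extension $\ov{K(H)}/\ov K$ to descend the degeneracy to $\ov K$, and then contradict the fact that the closed points of $H$ generate $J$. The paper encodes the proper abelian subvariety by an endomorphism: Poincar\'e reducibility gives a non-zero $e$ with $e\,\eta_H=0$, and the rigidity input is $\End(J_{\ov{K(H)}})\simeq\End(J_{\ov K})$, so the resulting $e'$ kills all of $\iota_\infty(H)$ at once, with no spreading out needed. You instead work directly with the abelian subvariety, spread it out over an open $U\subset H$, and invoke constancy of a family of abelian subvarieties of a constant abelian variety (via the Chow $\ov{K(H)}/\ov K$-trace, or Hodge-theoretic discreteness). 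That is a legitimate route, and what it buys is that you never mention endomorphisms; what it costs is a heavier black box, since the descent of abelian subvarieties of a constant abelian variety is itself usually proved by exactly the Poincar\'e-plus-$\End$-rigidity argument the paper uses, so the paper's proof is the more economical version of the same idea. Two small remarks: the spreading-out step is avoidable, since once you know $A_{\ov{K(H)}}=A_0\times_{\ov K}\ov{K(H)}$ with $A_0\subsetneq J_{\ov K}$, the condition $n\eta_H\in A_0$ says the generic point of $H_{\ov K}$ maps into the closed subset $A_0$ under $[n_J]\circ\iota_\infty$, hence so does all of $H_{\ov K}$ by taking closures; and the ``discreteness in the Hilbert scheme'' justification needs a word of care over a countable field such as $\ov K$ (base change to $\BC$ and use local constancy of the lattices $\RH_1$, say), though your primary appeal to the Chow trace sidesteps this.
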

\begin{proof}
 Assume instead that $\eta_H$ is contained in a proper abelian subvariety of $J_{K(H)}$. Then by the Poincar\'e irreducibility theorem, there exists a non-zero endomorphism $e\in \End(J_{\ov {K(H)}})$ (actually in $\End(J_{K(H)})$) such that $e\eta_H=0$. Because $\End(J_{\ov {K(H)}})\simeq \End(J_{\ov K})$, see \cite[Lemma 1.2.1.2]{CCO}, it follows that there exists $e'\in \End(J_{\ov K})$ such that $e's(x)=e'x=0$ for all $x\in H$. Thus $H$ would have to be contained in a proper abelian subvariety of $J$, which is impossible again since the closed points of $H$ generate $J$.
\end{proof}
\begin{corollary}\label{le:densefiniteindexcoset}
    For every finite index coset $\CN\subset \BZ$, $\bigcup_{n\in \CN} H_n$ is Zariski dense in $J$ and $\bigcup_{n\in \CN} C_n$ is Zariski dense in $X$.
\end{corollary}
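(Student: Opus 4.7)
The plan is to deduce the corollary from \Cref{le:nondegenerate} by tracking how translation and multiplication-by-$m$ act on the Zariski-dense subset $\BZ\eta_H \subset J_{K(H)}$.

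First I would write the coset as $\CN = n_0 + m\BZ$ for some $n_0\in\BZ$ and $m\in\BZ_{>0}$, so that
\[ \{n\eta_H : n\in\CN\} \;=\; t_{n_0\eta_H}\bigl([m_J](\BZ\eta_H)\bigr) \;\subset\; J_{K(H)}. \]
Since $[m_J]\colon J_{K(H)}\to J_{K(H)}$ is finite surjective and $t_{n_0\eta_H}$ is an isomorphism, both operations preserve Zariski density (the preimage of the closure of the image is closed and contains the original set). Combined with \Cref{le:nondegenerate}, this shows $\{n\eta_H : n\in\CN\}$ is Zariski dense in $J_{K(H)}$.

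Next I would observe that $n\eta_H$ is the image of the generic point of $H_{n,K(H)}\subset J_{K(H)}$. Let $Z\subset J$ be the Zariski closure of $\bigcup_{n\in\CN}H_n$; then $Z_{K(H)}\supset H_{n,K(H)}$ for each $n\in\CN$, so $Z_{K(H)}$ contains every $n\eta_H$. By the density statement just proved, $Z_{K(H)}=J_{K(H)}$, whence $Z=J$, establishing the first assertion. For the second, $\pi\colon J\to X$ is surjective and continuous, so it carries Zariski-dense subsets to Zariski-dense subsets; applying this to $\bigcup_{n\in\CN}H_n$ and using $\pi(H_n)=C_n$ yields the claim.

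I do not anticipate a substantive obstacle, since the non-degeneracy input from \Cref{le:nondegenerate} does all the real work; the only ingredient added here is the elementary observation that surjective morphisms and translations on $J_{K(H)}$ preserve Zariski density, which lets us pass from $\BZ\eta_H$ to the translated multiple $n_0\eta_H + m\BZ\eta_H$.
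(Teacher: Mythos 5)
Your argument is correct and is essentially the paper's own proof: the paper likewise writes $\CN=a\BZ+b$, notes that $t_{b\eta_H}\circ[a_{J_{K(H)}}]\colon J_{K(H)}\to J_{K(H)}$ is dominant so that \Cref{le:nondegenerate} yields density of $\{n\eta_H : n\in\CN\}$, and you have merely made explicit the (implicit) passage from this to the density of $\bigcup_{n\in\CN}H_n$ in $J$ and then to $X$ via the surjection $\pi$. One small imprecision: $n\eta_H$ is a closed point of $J_{K(H)}$ lying on $H_{n,K(H)}$ (equivalently, its image under the projection $J_{K(H)}\to J$ is the generic point of $H_n$), not the generic point of $H_{n,K(H)}$ itself, but the containment $n\eta_H\in Z_{K(H)}$ that your argument actually uses holds all the same.
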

\begin{proof}
 Write $\CN=a\BZ+b$ where $a,b$ are integers with $a\neq 0$. Since the morphism $t_{b\eta_H}\circ [a_{J_{K(H)}}]:J_{K(H)}\to J_{K(H)}$ is dominant, the conclusion follows directly from \Cref{le:nondegenerate}.
\end{proof}

\subsection{The Albanese torsor $\Alb^1(H)$ and rational curves on its Kummer quotient}\label{se:alb}
The zero-cycles of degree $1$ (up to linear equivalence) on $H$ fixed under $\tau_H$ form a torsor $T$ under $J[2]$, which is trivial if $H$ has a $K$-rational Weierstrass point. Let $c\in \RH^1(K,J[2])$ be the associated cohomology class. Then for every fixed $\ov K$-Weierstrass point $\infty\in W$, $c$ is given by the cocycle $\gamma\mapsto \langle\gamma(\infty)-\infty\rangle$. The twist of $J$ by the image of $c$ in $\RH^1(K,J)$, i.e. the quotient of $J\times T$ by the diagonal action of $J[2]$, is isomorphic to the Albanese torsor $J^1=\Alb^1(H)$.

In general, we denote by $J^i=\Alb^i(H)$ the $i$-th Albanese torsor, which parametrises zero-cycles of degree $i$ on $H$. There is an isomorphism $J^i\simeq J^{i+2}(H)$ for all $i\geq 0$, given by adding $\langle 2\infty\rangle$, and $J^0\simeq J$. The variety $J^1$ is an initial object within the category of torsors $V$ under $J$ with a morphism $\iota_V:H\to V$ (see \cite[Appendix A]{Wittenberg}). The involution $\tau_H$ extends formally to an involution $\tau_{J^1}$ on $J^1$ in a way that is compatible with the involution $\tau_J$ on $J$ and the $J$-torsor structure. We write $\pi^1:J^1\to X^1=\Kum(J^1)$ for the corresponding Kummer quotient morphism.

In the case $g=2$, the surfaces $X$ and $X^1$ are projective duals of each other as quartic surfaces in $\BP^3$. (By \cite[Chapter 4]{Cassels-Flynn}, they are isomorphic if and only if at least one of the  translates of $C\subset X$ by $J[2](\ov K)$ -- the so-called \emph{tropes} -- is defined over $K$, which happens exactly when the sextic polynomial $f$ defining $H$ has a linear or cubic factor.)
However, it is pointed out in \cite[Chapter 16]{Cassels-Flynn} (somehow surprisingly) that 
\begin{theorem}[Cassels--Flynn]\label{desing}
 The minimal desingularisations of the surfaces $X$ and $X^1$ are isomorphic over $K$.
\end{theorem}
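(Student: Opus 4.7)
The plan is to exploit the projective duality of $X$ and $X^1$ recalled just before the theorem statement. Both $X = \Kum(J)$ and $X^1 = \Kum(J^1)$ embed into $\BP^3_K$ (respectively into $(\BP^3_K)^*$) as quartic surfaces via the complete linear systems of twice the respective theta divisors, each of which is $K$-rational. Each embedded quartic has exactly sixteen $A_1$-singularities (the images of the $2$-torsion of $J$ and of $J^1$), and by Cassels--Flynn these two quartics are projective duals of one another.

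I would then introduce the Gauss map $G \colon X \dashrightarrow X^1 \subset (\BP^3_K)^*$, sending a smooth point of $X$ to its projective tangent plane. This map is defined over $K$, since it is given by the partial derivatives of a $K$-rational equation cutting out $X$. The crucial step is to show that $G$ is $K$-birational. For a non-developable quartic surface in $\BP^3$ with only ordinary double points as singularities, a Plücker-type class formula gives $\deg(G)\cdot\deg(X^*) = d(d-1)^{n-1} - 2s$. Substituting $d=4$, $n=3$, $s=16$ yields $\deg(G)\cdot\deg(X^*) = 36 - 32 = 4$; since $X^* = X^1$ is itself a quartic, i.e.\ has degree $4$, we conclude $\deg(G) = 1$, so $G$ is $K$-birational.

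With a $K$-birational map $G$ at hand, the induced map between the minimal desingularisations $\tX$ and $\wt{X^{1}}$ is a $K$-birational map of smooth projective K3 surfaces. Since a K3 surface has trivial canonical bundle and hence no $(-1)$-curves, it is the unique minimal model in its birational class, even over a non-algebraically closed base field. The induced $K$-birational map therefore extends to a $K$-isomorphism $\tX \cong \wt{X^{1}}$, which is the desired conclusion.

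The main obstacle I foresee is the rigorous verification, over the base field $K$, of the two classical inputs invoked above: the projective duality of the embedded quartics $X$ and $X^1$ (together with the $K$-rationality of the theta divisors on $J$ and $J^1$ that underlies it), and the class formula for nodal quartics. Both facts are standard over $\ov K$, and since the relevant constructions (partial derivatives, tangent planes, contributions of nodes) are functorial under base change, the descent to $K$ is expected to present no substantive obstacle; nevertheless, making this bookkeeping precise and checking that the projective duality genuinely matches $X^*$ with $X^1$ over $K$ (and not merely over $\ov K$) is the delicate part of the argument.
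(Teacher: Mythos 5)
Your proposal is essentially correct but takes a genuinely different route from the paper, and its strength depends on one external input. The paper proves the theorem from scratch: it constructs explicit, mutually inverse $K$-rational maps $b\colon X\dashrightarrow X^1$ and $b'\colon X^1\dashrightarrow X$ by identifying $\operatorname{Inv}^2(H)$ with $J$ and $\Sym^2(H)$ with $J^2$ via Riemann--Roch, and then passes to the minimal desingularisations; no projective model of either surface is used. You instead take as given the duality statement quoted before the theorem and upgrade it to $K$-birationality via the Gauss map: the class formula $\deg(G)\cdot\deg(X^*)=d(d-1)^2-2\delta=36-32=4$ for the $16$-nodal quartic forces $\deg(G)=1$, and uniqueness of the minimal model for K3 surfaces (valid over $K$ in characteristic zero, and implicitly also the paper's last step) concludes. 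Two caveats. In characteristic zero the class formula is not even needed: by reflexivity, the Gauss map of a hypersurface onto its dual hypersurface is automatically birational. More importantly, the point you defer as bookkeeping --- that the dual of $X$ is isomorphic \emph{over $K$} to the specific twist $X^1=\Kum(\Alb^1(H))$, not merely that the Kummer quartic is self-dual over $\ov K$ --- is exactly where the arithmetic content lies and does not follow from functoriality of partial derivatives; the paper only quotes it from Cassels--Flynn, so your argument is a reduction of one Cassels--Flynn statement to another rather than a self-contained proof. (Minor blemish: the symmetric theta divisor on $J$ need not be $K$-rational when $H$ has no $K$-rational Weierstrass point --- the canonical theta divisor lives on $J^1$; only the $|2\Theta|$-embedding of $X$ into $\BP^3_K$ is defined over $K$.)
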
  
\begin{proof}
We offer a more conceptional and simplified proof here. Indeed, consider the following involutions 
\begin{align*}
    \sigma_1: H\times H &\to H\times H\\ (x_1,x_2)&\mapsto (\tau_H(x_2),\tau_H(x_1)),\\
    \sigma_2: H\times H &\to H\times H\\ (x_1,x_2)&\mapsto (x_2,x_1).
\end{align*}
We define $\operatorname{Inv}^2(H)$ (resp. $\Sym^2(H)$) be the quotient of $H\times H$ by $\sigma_1$ (resp. by $\sigma_2$). 
For any point $(x_1,x_2)\in H\times H$, we write $\{x_1,x_2\}$ for the corresponding quotient image.

We now define
\begin{align*}
    \beta:\operatorname{Inv}^2(H) &\to J^0=J\\ \{x_1,x_2\}&\mapsto\langle x_1-x_2\rangle,\\
    \beta':\Sym^2(H)&\to J^2\\ \{x_1,x_2\}&\mapsto \langle x_1+x_2\rangle.
\end{align*} Then by the Riemann-Roch theorem  (see also \cite[Proposition 4(2)]{Baker-Poonen}), outside the image of the diagonal $(\id_H\times \id_H)(H)$ (resp. the anti-diagonal $(\id_H\times \tau_H)(H)$), $\beta$ (resp. $\beta'$) is an isomorphism. Hence both $\beta,\beta'$ are birational.

Let us define the rational maps $b,b'$ over $K$ by \begin{align*}
    b:X&\dashrightarrow X^1\\
    \{z,\tau_J(z)\}&\mapsto\{z+\langle x_2\rangle,z+\langle \tau_H(x_1)\rangle\}
\end{align*} for $z\in J\setminus J[2]$,
where $\{x_1,x_2\}$ is the preimage of $2z$ under $\beta$, and
\begin{align*}
    b': X^1 &\dashrightarrow X\\
\{z^1,\tau_{J^1}(z^1)\}&\mapsto\{z^1-\langle x\rangle,z^1-\langle x'\rangle\} 
\end{align*} for $z^1\in J^1\setminus T$,
where $\{x,x'\}$ is the preimage of $2z^1$ under $\beta'$. Then $b$ and $b'$ are inverse to each other, and hence are birational maps.  Blowing up the singular loci of $X$ and $X'$ gives the desired isomorphism.
\end{proof}
\begin{remark} Alternatively, in terms of cocycles, it can be checked that the image of $c$ in $\RH^1(K,\Aut_{\ov K}\ov K(X))$ is the coboundary of the automorphism of $\ov K(X)$ which is induced by the birational map \[\sigma:X_{\ov K}\dashrightarrow X_{\ov K}, \pi(z)\mapsto \pi(z+\langle x_2-\infty\rangle);\] thus $c$ vanishes in $\RH^1(K,\Aut_{\ov K}\ov K(X))$. 
\end{remark}

The image $H^1$ under the natural embedding $\iota_{J^1}:H\to J^1$ is stable under the involution $\tau_{J^1}$ and thus yields a smooth rational subcurve $C^1=\pi^1(H^1)\subset X^1$. For any \emph{odd} integer $n\in\BZ_{>0}$, we denote by $[n_{J^1}]$ the composition of the formal multiplication-by-$n$ map (of zero-cycles of degree $1$) 
$J^1\to J^n$ and the previously defined isomorphism $J^n\simeq J^1$. Because $[n_{J^1}]$ commutes with involution, it descends to a morphism $[n_{X^1}]$. We let $H^1_n=[n_{J^1}](H^1)$, the image of $H^1$ under the map $[n_{J^1}]$, and let $C^1_n=\pi^1(H^1_n)$. Since $$C^1_n=(\pi^1\circ [n_{J^1}])(H^1)=([n_{X^1}]\circ\pi^1)(H^1)=[n_{X^1}](C^1),$$  all the curves $C^1_n$ are rational.

If $H$ has a $K$-rational Weierstrass point $\infty$, then the $K$-morphism \begin{align*}
    \alpha_J:J^1&\to J\\ x&\mapsto x-\langle\infty\rangle
\end{align*} commutes with the involutions $\tau_J,\tau_{J^1}$, so it descends to a $K$-morphism $\alpha_X:X^1\to X$ and yields a commutative diagram
\[\begin{tikzcd}
    J^1\arrow[r,"\pi^1"]\arrow[d,"\alpha_J"] & X^1\arrow[d,"\alpha_X"]\\
    J \arrow[r, "\pi"] & X
   \end{tikzcd}\tag{$\dagger$}\]
where the vertical arrows are $K$-isomorphisms.
It is easy to see that $H^1_n$ maps to $H_n$ (as a subvariety of $J$) under $\alpha_J$, and $C_n^1$ maps to $C_n$ under $\alpha_X$ for all odd $n\in\BZ_{>0}$.

\section{Pullbacks along covers}\label{sec:pullback}
Throughout this section, $K$ denotes a finitely generated field of characteristic zero and $X=\Kum(J)$  denotes the Kummer quotient associated to the Jacobian $J$ of a genus $g$ hyperelliptic curve $H$ over $K$ with a $K$-rational Weierstrass point $\infty$.
We continue to consider $H$ embedded into $J$ via $\iota_\infty$.
\subsection{Pullback along covers factoring through the Jacobian}

\begin{lemma}\label{le:pullbacknongeomint}
    Let $\phi:Y\to X$ be a cover of degree $>1$. Suppose that the base change variety $Y\times_X J$ is not geometrically integral. Then for every finite index coset $\CN\subset\BZ$, there exists a subcoset $\CN^\prime\subset \CN$ of finite index, such that none of the rational curves $(C_n)_{n\in\CN^\prime}$ have a rational section along $\phi$ over $\overline{K}$.
\end{lemma}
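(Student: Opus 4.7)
The plan is to translate the scheme-theoretic hypothesis into a geometric factorisation over $\ov K$ and then exploit that $H_n\to C_n$ is a non-split double cover for $n\neq 0$. Since $\pi\colon J\to X$ has degree $2$ and $\charac(K)=0$, we may write $K(J)=K(X)(\sqrt{\alpha})$ for some $\alpha\in K(X)$. Because $Y$ is geometrically integral, $\ov K(Y)$ is a field and $\ov K(Y)\otimes_{\ov K(X)}\ov K(J)\cong\ov K(Y)[t]/(t^2-\alpha)$; the hypothesis that $Y\times_X J$ is not geometrically integral is therefore equivalent to $\alpha$ becoming a square in $\ov K(Y)$, i.e.\ to the existence of a dominant rational map $g\colon Y_{\ov K}\dashrightarrow J_{\ov K}$ over $X_{\ov K}$.

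Next, pick any coset $\CN'\subset\CN$ of finite index avoiding $0$: take $\CN'=\CN$ if $0\notin\CN$; otherwise $\CN=a\BZ$ for some $a\neq 0$ and one may take $\CN'$ to consist of the odd multiples of $a$, a coset of index $2$. Suppose for contradiction that for some $n\in\CN'$ there is a rational section $s\colon C_{n,\ov K}\dashrightarrow Y_{\ov K}$ of $\phi$. Composing with $g$ at the level of function fields gives a tower
\[\ov K(J)\hookrightarrow\ov K(Y)\hookrightarrow\ov K(C_n)\]
extending $\ov K(X)\hookrightarrow\ov K(C_n)$, which corresponds to a rational section $t\colon C_{n,\ov K}\dashrightarrow J_{\ov K}$ of $\pi$. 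Since $H_n$ is $\tau_J$-stable (as $[n_J]$ and $\tau_J$ commute and $H$ is $\tau_J$-stable), we have $\pi^{-1}(C_{n,\ov K})=H_{n,\ov K}$ as subsets of $J_{\ov K}$, so $t$ factors through $H_{n,\ov K}$, yielding a rational section of the generically $2$-to-$1$ morphism $H_{n,\ov K}\to C_{n,\ov K}$.

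To reach the contradiction, one shows that this double cover is non-split when $n\neq 0$. The involution $\tau_J$ acts non-trivially on $H_n$ because $\tau_J\circ[n_J]\circ\iota_\infty(x)=-[n_J]\circ\iota_\infty(x)$; if this coincided with $[n_J]\circ\iota_\infty(x)$ for every $x\in H$, we would have $H\subset J[2n]$, contradicting that $H$ generates $J$ (cf.\ the proof of \Cref{le:nondegenerate}). Together with the geometric irreducibility of $H_n$ (as the image of the geometrically irreducible curve $H$ under $[n_J]\circ\iota_\infty$), this shows that $\ov K(H_n)/\ov K(C_n)$ is a genuine degree-$2$ field extension, and such an extension admits no retraction, so no section of $H_{n,\ov K}\to C_{n,\ov K}$ can exist.

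The principal conceptual step is the identification of the scheme-theoretic hypothesis with a rational factorisation of $Y$ through $J$ over $\ov K$; once this is in hand, composition of rational maps proceeds cleanly at the function-field level, and the rest of the argument reduces to elementary facts about non-trivial degree-$2$ covers of integral curves.
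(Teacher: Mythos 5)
There is a genuine gap at the composition step. Your first reduction (non-integrality of $Y\times_X J$ is equivalent to $\ov K(J)\subseteq\ov K(Y)$, i.e.\ to a dominant rational map $g:Y_{\ov K}\dashrightarrow J_{\ov K}$ over $X_{\ov K}$) agrees with the paper, and your endgame (no section of the degree-$2$ cover $H_{n,\ov K}\to C_{n,\ov K}$, with $\tau_J$ acting non-trivially on $H_n$) is correct, if more elaborate than needed --- once one has a rational map $C_{n,\ov K}\dashrightarrow J_{\ov K}$ lifting the inclusion, one can simply invoke that rational maps from a rational curve to an abelian variety are constant. The problem is how you produce that lift. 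A rational section $s:C_{n,\ov K}\dashrightarrow Y_{\ov K}$ of $\phi$ is \emph{not} dominant (its image is a curve inside the $g$-dimensional $Y$), so it induces no field embedding $\ov K(Y)\hookrightarrow\ov K(C_n)$; the claimed tower $\ov K(J)\hookrightarrow\ov K(Y)\hookrightarrow\ov K(C_n)$ (and likewise ``$\ov K(X)\hookrightarrow\ov K(C_n)$'') does not exist. What you can do is form the composite $g\circ s$ as rational maps, but this is only defined if the closure of $s(C_{n,\ov K})$, a curve inside $\phi^{-1}(C_n)$, is not contained in the indeterminacy locus of $g$. Since $Y$ is merely normal, $g$ need not extend to a morphism, and its indeterminacy locus, while of codimension $\geq 2$, can be positive-dimensional as soon as $\dim X=g\geq 3$ and could a priori swallow the relevant component of $\phi^{-1}(C_n)$.

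This is precisely the point your choice of subcoset fails to address: passing to odd multiples of $a$ to avoid $n=0$ has nothing to do with the indeterminacy issue. The paper handles it by fixing a closed point $P\in H$ that is non-degenerate in $J$ (\Cref{le:nondegenerate}) and applying \cite[Lemma 4.6]{CDJLZ} to find a finite index subcoset $\CN'\subset\CN$ with $nP$ avoiding the bad locus, which forces $H_n$ (hence the image of any putative section) not to be contained in it for $n\in\CN'$; only then can one compose with $g$ and conclude. Indeed, the need to shrink to a subcoset $\CN'$ in the statement comes exactly from this step. Your argument as written would be complete only in the surface case $g=2$, where the indeterminacy locus of $g$ is a finite set of points and so cannot contain a curve; to prove the lemma in the generality stated you must add the non-degenerate point/coset-selection argument (or an equivalent way of guaranteeing that the section's image meets the domain of definition of $g$).
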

\begin{proof}
 We may base change everything to $\overline{K}$ and we fix an algebraic closure $\Omega$ of $\overline{K}(X)$. We have that $Y_{\overline{K}}\times_{X_{\overline{K}}} J_{\overline{K}}$ is not geometrically integral if and only if $\ov K(Y)\otimes_{\ov K(X)}\ov K(J)$ is not a field, or equivalently, the field extensions $\ov K(Y)$ and $\ov K(J)$ in $\Omega$ are not linearly disjoint over $\ov K(X)$. Since $[\ov K(Y)\cdot \ov K(J):\ov K(Y)]\leq [\ov K(J):\ov K(X)]=2$, this happens if and only if $[\ov K(Y)\cdot \ov K(J):\ov K(Y)]<2$, or equivalently $\ov K(J)\subseteq \ov K(Y)$. It follows that, over $\overline{K}$, $\phi:Y\to X$ factors as a rational map $\phi_1:Y\dashrightarrow J$ then to $X$. 
 
 By \Cref{le:nondegenerate}, we fix a closed point $P\in H$ which is non-degenerate in $J$. By \cite[Lemma 4.6]{CDJLZ}, there exists a finite index subcoset $\CN^\prime\subset \CN$ such that the set $\{nP,n\in\CN^\prime\}$ does not intersect with the indeterminacy locus, say $Z\subset J$, of $\phi_1$. This implies that none of the curves $(H_n)_{n\in\CN^\prime}$ is contained in $Z$. If a rational curve $(C_n)_{\overline{K}}\subset X_{\overline{K}}$, $n\in\CN'$, has a rational section along $\phi$, say $\widetilde{\phi}$, then it has a rational section $\phi_1\circ\widetilde{
 \phi}$ along $\pi$. This is impossible since there are no non-constant rational maps from the projective line to abelian varieties (see \cite[Corollary~3.8]{Milne1}).
\end{proof}

\subsection{Pullbacks along unramified covers}

\begin{lemma}\label{le:surjfundgrp}
Let $D$ be a smooth, projective curve of genus $\geq 1$ over a separably closed field. Fix a point $x$ in $D$ inducing the Abel--Jacobi embedding $\iota_x:D\hookrightarrow \Jac(D)$. Let $\pi_1(D,x)$ (resp. $\pi_1(\Jac(D),0)$) be the pointed étale fundamental group of $D$ (resp. $\Jac(D)$). Then the map \[(\iota_x)_*:\pi_1(D,x)\to \pi_1(\Jac(D),0)\] induced by $\iota_x$ is surjective. Consequently, every pullback via $\iota_x$ of a connected finite \'etale covering of $\Jac(D)$ remains connected as a finite \'etale covering over $D$.
\end{lemma}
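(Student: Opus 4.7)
The plan is to reduce the surjectivity of $(\iota_x)_*$ to a geometric claim about pullbacks of isogenies, and then close the argument by the universal property of $\Jac(D)$ as the Albanese variety of $(D,x)$. First, I would observe that the ``consequently'' clause and the surjectivity assertion are equivalent by Galois theory of finite étale covers: the image of $(\iota_x)_*$ is a closed subgroup of the pro-finite group $\pi_1(\Jac(D),0)$ (being the continuous image of a compact group), so it is the whole group iff it surjects onto every finite quotient, which is precisely the statement that every connected finite étale cover of $\Jac(D)$ pulls back to a connected cover of $D$ along $\iota_x$.

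Thus it suffices to prove the latter. Since $\Jac(D)$ is an abelian variety over a separably closed field, every connected finite étale cover of $J:=\Jac(D)$ is, after fixing a lift $0_B$ of $0$, an isogeny $\phi:B\to J$ of abelian varieties with finite kernel $K:=\ker\phi$ acting freely by translation. Form $\tilde D:=D\times_J B$, and let $\tilde D_0$ be the connected component containing $(x,0_B)$. Because $K$ acts simply transitively on each geometric fiber of $\tilde D\to D$ and permutes the connected components, it acts transitively on those components; let $K_0\leq K$ be the stabilizer of $\tilde D_0$. Then $\tilde D_0\to D$ is $K_0$-Galois étale, so $\tilde D_0/K_0\cong D$, and this isomorphism embeds $D$ inside $B/K_0$ as a section of the residual isogeny $B/K_0\to J$ sending $x$ to the identity of $B/K_0$.

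The universal property of the Albanese variety $\Jac(D)$ now applies: the section $D\to B/K_0$ with $x\mapsto 0$ extends uniquely to a group homomorphism $s:J\to B/K_0$, and by the same uniqueness the composite $(B/K_0\to J)\circ s$ equals $\id_J$, so $s$ is a splitting of the isogeny $B/K_0\to J$. But any Galois étale cover that admits a splitting decomposes into a disjoint union of copies of its base, indexed by the Galois group; since $B/K_0$ is an abelian variety and hence connected, this forces $K/K_0$ to be trivial, i.e.\ $K_0=K$ and $\tilde D=\tilde D_0$.

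The main obstacle is to set up the third step correctly. A naive application of the Albanese property to the inclusion $\tilde D_0\hookrightarrow B$ only yields a surjective homomorphism $\Jac(\tilde D_0)\to B$, and this gives no contradiction because $\Jac(\tilde D_0)$ can have strictly larger dimension than $B$ when $\tilde D_0\to D$ has degree $>1$. The crucial maneuver is to first quotient out by $K_0$, so that the curve $D$ itself — rather than an unknown cover of it — appears as a section of the isogeny, and the Albanese universal property of $(D,x)$ becomes directly applicable.
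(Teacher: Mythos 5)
Your proof is correct, but it takes a genuinely different route from the paper's. The paper simply cites Milne's result that $\iota_x$ induces an isomorphism $\pi_1(D,x)^{\operatorname{ab}}\to\pi_1(\Jac(D),0)$, of which surjectivity is an immediate consequence. You instead give a self-contained geometric proof of surjectivity alone: after the standard Galois-categorical reduction (surjectivity of $(\iota_x)_*$ is equivalent to connectedness of pullbacks of connected covers), you invoke the Serre--Lang theorem to write any connected finite \'etale cover of $J=\Jac(D)$ as a separable isogeny $\phi:B\to J$, pass to the connected component $\tilde D_0$ of $D\times_J B$ through $(x,0_B)$ with stabiliser $K_0\leq K=\ker\phi$, and quotient by $K_0$ to obtain a lift $D\to B/K_0$ of $\iota_x$ sending $x$ to $0$; the Albanese universal property of $(\Jac(D),\iota_x)$ then splits the residual isogeny $B/K_0\to J$ by a homomorphism, and connectedness of the abelian variety $B/K_0$ forces $K_0=K$, hence $\tilde D$ connected. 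All the steps check out: $K$ acts transitively on the components of $\tilde D$ because the quotient is the connected curve $D$, the component $\tilde D_0$ is Galois over $D$ with group $K_0$ by the usual degree count, the map to $B/K_0$ is $K_0$-invariant and so descends to $D$, the uniqueness in the Albanese property gives $\psi\circ s=\id_J$, and a Galois \'etale cover with a section is a disjoint union of copies of the base. Your key observation --- quotienting by $K_0$ so that $D$ itself, rather than the unknown curve $\tilde D_0$, maps to an isogeny over $J$ --- is exactly what makes the Albanese property applicable, and you correctly flag why the naive use of $\Jac(\tilde D_0)$ fails. The ingredients (Serre--Lang, the Albanese property) are standard and the first is already quoted elsewhere in the paper. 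What your approach buys is independence from the citation and extra generality (the same argument shows $\pi_1$ surjects onto $\pi_1$ of the Albanese for any pointed smooth projective variety); what the paper's citation buys is the stronger statement that the map identifies $\pi_1(\Jac(D),0)$ with the abelianisation of $\pi_1(D,x)$, which the lemma as stated does not actually need.
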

\begin{proof}
This is explained in \cite[Proposition 9.1]{Milne2}.
Indeed, $(\iota_x)_*$ induces an isomorphism \[\pi_1(D, x)^{\operatorname{ab}}\to\pi_1(\Jac(D),0),\] where $\pi_1(D,x)^{\operatorname{ab}}$ is the maximal abelian quotient of $\pi_1(D,x)$ and in particular $(\iota_x)_*$ is surjective. Thus by the theory of Galois categories, all connected finite \'etale coverings of $\Jac(D)$ have abelian Galois group, and the surjectivity of $(\iota_x)_*$ implies that all connected finite abelian coverings of $D$ are obtained precisely by pulling back the connected finite \'etale coverings of $J$. See also \cite[Proposition 5.5.4 (2)]{Szamuely}.
\end{proof}

\begin{remark}\label{rmk:pullbacklemma}
 If $g=2$, one may also prove \Cref{le:surjfundgrp} by using the fact that the pullback of a geometrically irreducible, unibranch, ample divisor on an abelian variety along an isogeny is again geometrically irreducible. (See \cite[Corollary 4.3.4]{Birkenhake-Lange}. Note that the statement is missing the crucial unibranch (or at the least smoothness) assumption.)
\end{remark}

\Cref{le:surjfundgrp} applies to $H\hookrightarrow J$, while an analogous statement for $H_n$, $n>1$ fails because of the non-smoothness. However, one can remedy the situation by restricting to a subfamily:

\begin{proposition}[``Coprimality trick'']\label{prop:pullbackisogenyH}
 Let $\phi:J'\to J$ be an unramified cover. Then $\phi^{-1}(H_n)$ are geometrically irreducible for all $n$ coprime to $\deg(\phi)$. 
\end{proposition}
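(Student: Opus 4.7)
The plan is to reduce the claim to a statement about the smooth curve $H$ and then exploit the structure of the \'etale fundamental group of the abelian variety $J$. After base change to $\ov K$ (the assertion is geometric), consider the composition
\[
f \;:=\; [n_J] \circ \iota_\infty \colon H \longrightarrow J,
\]
whose image is $H_n$. Form the pullback $\widetilde Y := H \times_J J'$ of $\phi$ along $f$. Since $f$ factors as $H \twoheadrightarrow H_n \hookrightarrow J$, the canonical map $\widetilde Y \to \phi^{-1}(H_n) = H_n \times_J J'$ is the base change of the surjection $H \twoheadrightarrow H_n$, and hence is surjective. It therefore suffices to prove that $\widetilde Y$ is geometrically irreducible; its image $\phi^{-1}(H_n)$ will then be so as well.

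Since $\widetilde Y \to H$ is finite \'etale and $H$ is smooth (hence normal), $\widetilde Y$ is normal, and therefore irreducible if and only if connected. The number of connected components of $\widetilde Y$ equals the number of orbits of $f_*(\pi_1(H))$ on the transitive $\pi_1(J,0)$-set $\phi^{-1}(0)$. The crucial input is the image of $f_*$: by \Cref{le:surjfundgrp}, $(\iota_\infty)_*$ is surjective, while for any abelian variety $[n_J]_*$ is multiplication by $n$ on $\pi_1(J,0)$. Composing,
\[
f_*(\pi_1(H)) \;=\; n \cdot \pi_1(J,0),
\]
a subgroup of index $n^{2g}$.

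The cover $\phi$ corresponds to an open subgroup $N := \phi_*\pi_1(J') \subset \pi_1(J,0)$ of index $d := \deg(\phi)$, with finite abelian quotient $G := \pi_1(J,0)/N$ of order $d$ acting simply transitively on $\phi^{-1}(0)$. Transitivity of $f_*(\pi_1(H))$ on the fibre is then equivalent to $nG = G$. This is where the coprimality hypothesis enters decisively: when $\gcd(n,d)=1$, multiplication by $n$ is an automorphism of every finite abelian group of order $d$, so $nG = G$. Hence $\widetilde Y$ is connected, and thus $\phi^{-1}(H_n)$ is geometrically irreducible.

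The main obstacle the plan must overcome is the singularity of $H_n$ for $n > 1$, which blocks any direct application of \Cref{le:surjfundgrp} to the inclusion $H_n \hookrightarrow J$: the \'etale fundamental group of the singular scheme $H_n$ and the image of $\pi_1(H_n) \to \pi_1(J)$ are hard to control. The strategy circumvents this by working with the smooth normalisation $H$ instead, at the cost that $f_*(\pi_1(H)) = n\pi_1(J)$ is only an index-$n^{2g}$ subgroup of $\pi_1(J)$. The \emph{coprimality trick} is precisely the observation that this loss of information is immaterial modulo $N$ whenever $\gcd(n,\deg \phi)=1$.
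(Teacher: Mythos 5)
Your proof is correct, and it takes a genuinely different route from the paper's. The paper first handles $n=1$ via \Cref{le:surjfundgrp} (so $\phi^{-1}(H_1)$ is irreducible and contains $\Ker(\phi)$, after using Lang--Serre to make $\phi$ an isogeny), and then proves the set-theoretic identity $\phi^{-1}(H_n)=[n_{J'}](\phi^{-1}(H_1))$, where coprimality enters through the fact that $[n_{J'}]$ restricts to an automorphism of $\Ker(\phi)$; irreducibility then follows because $\phi^{-1}(H_n)$ is the image of an irreducible variety. You instead pull $\phi$ back along the smooth parametrisation $f=[n_J]\circ\iota_\infty:H\to H_n\subset J$ and compute the monodromy directly: $f_*\pi_1(H)=n\cdot\pi_1(J,0)$ by \Cref{le:surjfundgrp} together with the standard fact that $[n_J]_*$ is multiplication by $n$ on $\pi_1(J,0)$, and coprimality enters through $nG=G$ for the abelian deck group $G$ of order $\deg(\phi)$, giving transitivity on the fibre and hence connectedness of the normal scheme $H\times_J J'$, whose surjection onto $\phi^{-1}(H_n)$ yields irreducibility. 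Your argument is slightly more economical in that it never needs Lang--Serre or the group structure on $J'$ (only that $\pi_1(J,0)$ is abelian and that $\phi$ is finite étale), and it makes the role of the singularities of $H_n$ transparent by replacing $H_n$ with its normalisation $H$; the paper's argument is more elementary once the $n=1$ case is granted, trading the monodromy computation for a short computation with translates by $\Ker(\phi)$. Both proofs, like the paper's, implicitly use that an unramified cover of the smooth variety $J$ by a normal variety is automatically étale, which is harmless here.
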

\begin{proof}
We may base change everything to $\ov K$, without affecting the geometrical irreducibility. By the Lang--Serre theorem \cite[Chapter I \S2]{SerreAGCF}, $J'$ is an abelian variety and $\phi$ can be made into an isogeny. The case where $\deg(\phi)=1$ is clear and so we assume from now on that $\deg(\phi)>1$.
 
 If $n=1$, then the claim follows from \Cref{le:surjfundgrp}. In particular, $\Ker(\phi)\subset \phi^{-1}(H_1)$. In general, if $n$ is coprime to $\deg(\phi)=\deg(\Ker(\phi))$, then $[n_{J^\prime}]|_{\Ker\phi}$ is an isomorphism. Thus
 \begin{align*}
    \phi^{-1}(H_n)&=\phi^{-1}([n_J](H_1))\\ &=[n_{J^\prime}](\phi^{-1}(H_1))+\Ker(\phi)\\ &=[n_{J^\prime}]\left(\phi^{-1}(H_1)+\Ker(\phi)\right)\\ &=[n_{J^\prime}](\phi^{-1}(H_1)),
 \end{align*}
 and hence 
$\phi^{-1}(H_n)$ is geometrically irreducible.
\end{proof}

\begin{corollary}\label{co:pullbackrationalcurve}
    Let $\phi_i:Y_i\to X,i\in I$ be any finite collection of covers of with $\deg(\phi_i)>1$ such that $Y_i\times_X J$  are all geometrically integral, and the base change morphisms $\widetilde{\phi}_i:Y_i\times_X J\to J$ are all unramified. Then $\phi_i^{-1}(C_n)$ are all geometrically irreducible for all $n$ coprime to $\prod_{i}\deg(\phi_i)$. 
\end{corollary}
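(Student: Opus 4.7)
The plan is to treat each index $i$ separately and reduce the geometric irreducibility of $\phi_i^{-1}(C_n)$ to that of $\widetilde{\phi}_i^{-1}(H_n)$, which is supplied directly by Proposition~\ref{prop:pullbackisogenyH}. The bridge between the two is the Cartesian square coming from the Kummer quotient
\[
\begin{tikzcd}
Y_i\times_X J \arrow[r,"\widetilde{\phi}_i"]\arrow[d] & J \arrow[d,"\pi"]\\
Y_i \arrow[r,"\phi_i"] & X.
\end{tikzcd}
\]
By hypothesis, $Y_i\times_X J$ is geometrically integral and $\widetilde{\phi}_i$ is finite and unramified, so $\widetilde{\phi}_i$ is an unramified cover of the abelian variety $J$ of degree $\deg(\phi_i)$. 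Since $\gcd(n,\prod_{j}\deg(\phi_j))=1$, in particular $\gcd(n,\deg(\phi_i))=1$, so Proposition~\ref{prop:pullbackisogenyH} applies and yields that $\widetilde{\phi}_i^{-1}(H_n)$ is geometrically irreducible.

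Next I will transfer this irreducibility through the hyperelliptic $2$-to-$1$ quotient $\pi|_{H_n}:H_n\twoheadrightarrow C_n$. Using associativity of fiber products,
\[\widetilde{\phi}_i^{-1}(H_n)=(Y_i\times_X J)\times_J H_n \simeq Y_i\times_X H_n,\]
where the structural morphism $H_n\to X$ factors as $H_n\twoheadrightarrow C_n\hookrightarrow X$. Base-changing this factorisation along $\phi_i$ produces a finite surjection
\[Y_i\times_X H_n \twoheadrightarrow Y_i\times_X C_n=\phi_i^{-1}(C_n).\]
Since the continuous surjective image of a geometrically irreducible scheme is geometrically irreducible, the desired conclusion for $\phi_i^{-1}(C_n)$ follows.

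I do not foresee a serious obstacle here: the corollary is essentially a formal descent of Proposition~\ref{prop:pullbackisogenyH} through the involution quotient $\pi:J\to X$, and the coprimality hypothesis $\gcd(n,\prod_j\deg(\phi_j))=1$ is precisely what is needed to run the argument above for every index $i$ simultaneously.
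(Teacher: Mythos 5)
Your proof is correct and follows essentially the same route as the paper: apply Proposition~\ref{prop:pullbackisogenyH} to the unramified cover $\widetilde{\phi}_i$ of $J$ (using geometric integrality of $Y_i\times_X J$ to identify $\deg\widetilde{\phi}_i=\deg\phi_i$), then transfer geometric irreducibility from $\widetilde{\phi}_i^{-1}(H_n)$ to $\phi_i^{-1}(C_n)$. The paper phrases this last transfer as an equivalence via the stability of $H_n$ under the involution, while you prove only the needed implication by exhibiting the finite surjection $Y_i\times_X H_n\twoheadrightarrow Y_i\times_X C_n$; this is a harmless and in fact slightly more explicit version of the same step.
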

\begin{proof}
Since $H_n$ is stable under the involution,  $\phi_i^{-1}(C_n)$ is geometrically irreducible if and only if $\widetilde{\phi}_i^{-1}(H_n)$ is.  Since $Y_i\times_X J$ are all geometrically integral, we have $\deg(\widetilde{\phi}_i)=\deg(\phi_i)$. We then apply \Cref{prop:pullbackisogenyH} to $\widetilde{\phi}_i:Y_i\times_X J\to J$.
\end{proof}

\subsection{Pullbacks along ramified covers}
The following result is essentially \cite[Theorem 1.3]{CDJLZ}.
\begin{theorem}[Corvaja--Demeio--Javanpeykar--Lombardo--Zannier]\label{thm:CDJLZ1}
 Let $A$ be an abelian variety over a finitely generated field $K$ of characteristic $0$ with a non-degenerate point $x\in A(K)$. Let $f_i:B_i\to A$ be a finite collection of ramified covers. Then for any finite index coset $\CP\subset \BZ x$, there exists a finite index subcoset of $\CP^\prime\subset\CP$ such that for every $x'\in \CP^\prime$, $(f_i^{-1}(x'))(K)=\emptyset$.
\end{theorem}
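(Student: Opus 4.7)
The plan is to reduce everything to \cite[Theorem 1.3]{CDJLZ}, the recent coset-theoretic refinement of the weak Hilbert Property for abelian varieties. That theorem already handles the case of a single ramified cover: it asserts that for an abelian variety $A$ over a finitely generated field $K$ of characteristic $0$, a non-degenerate point $x \in A(K)$, and a single ramified cover $f\colon B \to A$, every finite index coset $\CP \subset \BZ x$ contains a finite index subcoset $\CP'$ on which no element admits a $K$-rational preimage under $f$.

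To deduce the stated version with a collection $f_1, \ldots, f_r$, I would argue by induction on $r$. Set $\CP_0 := \CP$ and, at the $i$-th stage, apply the single-cover result to $f_i$ with input coset $\CP_{i-1}$ to produce a finite index subcoset $\CP_i \subseteq \CP_{i-1}$ whose elements all have empty rational fiber under $f_i$. Setting $\CP' := \CP_r$ yields a finite index subcoset of $\CP$, and by construction each $x' \in \CP'$ satisfies $(f_i^{-1}(x'))(K) = \emptyset$ for every $i$ simultaneously. The finite index property propagates through the induction, since a finite index subcoset of a finite index coset of the cyclic group $\BZ x$ is itself of finite index in $\BZ x$.

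The hard part—packaged into CDJLZ's theorem and not reproved here—is the upgrade from Zariski-density information about the Hilbert set of $A$ (which is the usual form of the weak Hilbert Property for abelian varieties) to an honest coset-level avoidance statement along the cyclic subgroup $\BZ x$. Morally, this requires controlling how the ramification of each $f_i$ interacts with the isogenies $[n_A]$ and invoking a Chevalley--Weil style specialisation argument to forbid $K$-rational fibres over entire arithmetic progressions of multiples of $x$. For the purposes of the present proof, I would only need to quote CDJLZ Theorem 1.3 in the single-cover case and chain its applications via the inductive refinement of cosets described above.
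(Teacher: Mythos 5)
Your reduction from several covers to a single one---successively shrinking the coset one cover at a time, noting that finite index is transitive in $\BZ x$---is exactly the first step of the paper's proof and is fine. The gap is in what you attribute to \cite[Theorem 1.3]{CDJLZ}. That theorem takes as input a ramified cover together with a non-degenerate point (equivalently, the \emph{full} cyclic group it generates) and outputs a finite index coset of that group over which the fibres have no $K$-points; it does not accept an arbitrary finite index coset $\CP\subset\BZ x$ as input, which is precisely the form you quote and the form your induction needs at every stage (already the initial $\CP$ may be, say, $bx+\BZ(mx)$ with $b\not\equiv 0 \bmod m$). Nor can this be repaired by simply applying the cited theorem to $\BZ x$ or to $\BZ(mx)$ and intersecting the output coset with $\CP$: the two arithmetic progressions may be incompatible (a congruence obstruction), so the intersection can be empty. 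This is why the paper's statement is only ``essentially'' the cited theorem and why its proof contains an extra step.

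The missing idea is a translation trick. Write $\CP=\Pi+bx$ with $\Pi\subset\BZ x$ a finite index subgroup, so $\Pi=\BZ(mx)$ with $mx$ again non-degenerate. Replace each cover $f$ by $f'=t_{-bx}\circ f$, which is still a ramified cover of $A$ since $t_{-bx}$ is an automorphism; apply \cite[Theorem 1.3]{CDJLZ} to $f'$ and the point $mx$ to obtain a finite index coset $\Pi'\subset\Pi$ on which all fibres of $f'$ have no $K$-point, and set $\CP'=\Pi'+bx$. Since $(f')^{-1}(y)=f^{-1}(y+bx)$, this yields the desired avoidance for $f$ on $\CP'$. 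With this twist inserted (at each stage, or once for all covers, since the translation is the same), your successive-refinement argument goes through and coincides with the paper's proof.
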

\begin{proof}
 It suffices to work with a single ramified cover $f:B\to A$ by successively analysing the finite index coset obtained at each step. 
Write $\CP=\Pi+bx$ where $b\in\BZ$ and $\Pi$ is a finite index subgroup of $\BZ x$ and $P\in A(K)$. Define $f^\prime=t_{-bx}\circ f$ where $t_{-bx}:A\to A$ is the translation-by-$(-bx)$ morphism. Then we apply \cite[Theorem 1.3]{CDJLZ} to $\Pi$ and get a finite index coset $\Pi^\prime\subset\Pi$, for which any $K$-fibre of $f^\prime$ has no $K$-point. Then we may take $\CP^\prime=\Pi^\prime+bx$ to obtain the same  conclusion for $f$. 
\end{proof}

\begin{corollary}\label{pullbackramified}
 Let $\phi_i:Y_i\to X,i\in I$ be any finite collection of covers such that all the base change morphisms $\widetilde{\phi}_i:Y_i\times_X J\to J$ are ramified with $Y_i\times_X J$ geometrically integral. Then for any finite index coset $\CN\subset\BZ$, there exists a finite index subcoset $\CN'\subset\CN$ such that for any $n\in\CN'$, the rational curve $C_n$ does not have a rational section along $\phi_i$ for all $i\in I$.
\end{corollary}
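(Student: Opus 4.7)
The plan is to transfer the problem from $X$ to the Jacobian $J$ via the double cover $\pi\colon J\to X$, and then invoke \Cref{thm:CDJLZ1} for $J$ base-changed to the function field $K(H)$, using the non-degenerate point $\eta_H$ supplied by \Cref{le:nondegenerate}.

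First, I would establish the following transfer: suppose $C_n$ admits a rational section $s\colon U\to Y_i$ along some $\phi_i$. Base-changing $s$ along the composition $H \xrightarrow{n\iota_\infty} J \xrightarrow{\pi} X$ (which factors as $H \twoheadrightarrow C_n \hookrightarrow X$) produces a rational section of $Y_i\times_X H\to H$. Passing to the generic point yields $y\in Y_i(K(H))$ with $\phi_i(y)=\pi(n\eta_H)\in X(K(H))$. The pair $(y,n\eta_H)$ is then a $K(H)$-point of $Y_i\times_X J$ lying over $n\eta_H\in J(K(H))$, i.e.\ $\widetilde{\phi}_i^{-1}(n\eta_H)(K(H))\neq\emptyset$. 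Working through $H$ rather than $H_n$ has the advantage of avoiding any birationality issues for $[n_J]|_H$ and of directly identifying the natural lift of $\pi(n\eta_H)$ to $J(K(H))$ as $n\eta_H$.

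Next, I would verify the hypotheses of \Cref{thm:CDJLZ1}. The base-changed maps $\widetilde{\phi}_{i,K(H)}\colon(Y_i\times_X J)_{K(H)}\to J_{K(H)}$ remain ramified covers with geometrically integral source (after normalization if necessary, which affects neither ramification nor $K(H)$-points of fibers over $n\eta_H$). By \Cref{le:nondegenerate}, $\eta_H$ is non-degenerate in $J_{K(H)}$, so $\BZ\eta_H$ is infinite cyclic, and $\CN\eta_H:=\{n\eta_H:n\in\CN\}$ is a finite index coset of $\BZ\eta_H$. Applying \Cref{thm:CDJLZ1} to $J_{K(H)}$, the point $\eta_H$, the finite collection $\{\widetilde{\phi}_{i,K(H)}\}_{i\in I}$, and the coset $\CN\eta_H$ produces a finite index subcoset $\CP'\subset\CN\eta_H$ such that $\widetilde{\phi}_{i,K(H)}^{-1}(x')(K(H))=\emptyset$ for every $x'\in\CP'$ and every $i\in I$. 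Writing $\CP'=\CN'\eta_H$ for a finite index subcoset $\CN'\subset\CN$, the transfer step then forces $C_n$ to admit no rational section along any $\phi_i$ for $n\in\CN'$, as required.

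The principal delicate point is the transfer in the first step, i.e., converting a rational section on the Kummer quotient curve $C_n$ into a $K(H)$-point of a fiber of an abelian-variety cover. Once the routing through $H$ (rather than $H_n$) is in place and the $(y, n\eta_H)$-lift is identified, the rest of the argument is a clean invocation of \Cref{thm:CDJLZ1}, with the non-degeneracy of $\eta_H$ in $J_{K(H)}$ guaranteed by \Cref{le:nondegenerate}.
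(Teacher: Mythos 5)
Your proposal is correct and takes essentially the same route as the paper: base-change to $K(H)$, use the non-degeneracy of $\eta_H$ (\Cref{le:nondegenerate}) to apply \Cref{thm:CDJLZ1} to the ramified covers $\widetilde{\phi}_{i,K(H)}$ along the coset $\CN\eta_H$, and convert a hypothetical rational section of $C_n$ along $\phi_i$ into a $K(H)$-point $(y,n\eta_H)$ of the fibre $\widetilde{\phi}_{i,K(H)}^{-1}(n\eta_H)$. The only cosmetic difference is that you pull back directly along $H\twoheadrightarrow C_n$, whereas the paper routes the same lifting argument through the intermediate function field $K(C)$ via its cartesian diagram.
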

\begin{proof}[Proof of \Cref{pullbackramified}]
 Consider the base change \[\widetilde{\phi}_{i,K(H)}:(Y_i\times_X J)_{K(H)}\to J_{K(H)}\] to the function field $K(H)$ of $H$. This is still a ramified morphism with $(Y_i\times_X J)_{K(H)}$ being geometrically integral.

 By \Cref{le:nondegenerate}, the $K(H)$-rational point $\eta_H\in J_{K(H)}$ induced by $\iota_\infty:H\hookrightarrow J$ is non-degenerate. By \Cref{thm:CDJLZ1} applied to $\CN \eta_H$ over the finitely generated field $K(H)$, there exists a finite index coset $\CN'\subset \CN$ such that for any $\eta'_H\in \CN' \eta_H\subset J_{K(H)}$,
 \[\eta'_H\notin \bigcup_{i\in I}\widetilde{\phi}_{i,K(H)}((Y_i\times_X J)_{K(H)}(K(H))).\]
 
 Consider the cartesian diagram
 \[\begin{tikzcd}
    (Y_i\times_X J)_{K(H)}\arrow[r]\arrow[d,"\widetilde{\phi}_{i,K(H)}"] & Y_{i,K(H)}\arrow[d,"\phi_{i,K(H)}"]\arrow[r] & Y_{i,K(C)}\arrow[d,"\phi_{i,K(C)}"]\\
    J_{K(H)} \arrow[r] & X_{K(H)} \arrow[r] & X_{K(C)}
   \end{tikzcd}
\] where the rightmost square is the base change from $K(C)$ to $K(H)$. Let $\eta_C\in X_{K(C)}$ be the image of $\eta_H$ under $J_{K(H)}\to X_{K(C)}$. 
 Then for any $\eta'_C\in\CN' \eta_C\subset X_{K(C)}$,
 \[\eta'_C\notin \bigcup_{i\in I}\phi_{i,K(C)}(Y_{i,K(C)}(K(C))).\]
 Equivalently, none of the curves $C_n$, $n\in\CN'$, has a rational section along $\phi_i$ for all $i\in I$.
\end{proof}

\section{Hilbert Property for Jacobian Kummer varieties}\label{sec:proofs}
\begin{proposition}\label{criterion}
 Let $X$ be a quasi-projective variety over a Hilbertian field $K$. Assume that there exists an infinite set $\CC$ of (not necessarily smooth) $K$-rational subvarieties on $X$ satisfying the following condition:
 
 For every finite family of covers $\phi_i:Y_i\to X$, $i=1,\dots, r$, with $\deg(\phi_i)>1$, there exists a subset $\CC'\subset \CC$ such that $\bigcup_{C\in\CC'}C$ is Zariski dense in $X$ and for all $C\in\CC'$ and $i=1,\dots, r$, $C$ does not have a rational section along $\phi_i$.
 
 Then $X$ has the Hilbert Property.
\end{proposition}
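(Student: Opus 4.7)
The plan is to invoke \Cref{co:thincoverequiv}, which reduces the Hilbert Property to the following: for any finite family of covers $\phi_i:Y_i\to X$, $i=1,\dots,r$, with $\deg(\phi_i)>1$, the set $X(K)\setminus\bigcup_{i=1}^r\phi_i(Y_i(K))$ is Zariski dense in $X$. Apply the hypothesis to this family to obtain a subset $\CC'\subset\CC$ whose union $\bigcup_{C\in\CC'}C$ is Zariski dense in $X$, such that no member $C\in\CC'$ has a rational section along any $\phi_i$. The strategy is then to work one curve (or $K$-rational subvariety) at a time, transfer the problem to projective space via the $K$-rationality of $C$, and exploit the Hilbertianness of $K$.

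Fix $C\in\CC'$ and a birational $K$-morphism $\nu:\BP^n_K\dashrightarrow C$ realising the $K$-rationality of $C$. For each $i$, form the fibre product $Z_i=\phi_i^{-1}(C)\times_C\BP^n_K$ with structure morphism $\psi_i:Z_i\to\BP^n_K$. Since $\phi_i^{-1}(C)\to C$ has finite generic fibre, so does $\psi_i$. Moreover $\psi_i$ admits no rational section: any such section, composed with $Z_i\to\phi_i^{-1}(C)$, would produce a rational map $\BP^n_K\dashrightarrow\phi_i^{-1}(C)$ lifting $\nu$, and hence (via the inverse of $\nu$ on a dense open) a rational section of $\phi_i^{-1}(C)\to C$, contradicting the hypothesis. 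Consequently each $\psi_i(Z_i(K))$ is a thin subset of $\BP^n(K)$, and so is their finite union. Since $K$ is Hilbertian, $\BP^n_K$ has the Hilbert Property, so $\BP^n(K)\setminus\bigcup_{i=1}^r\psi_i(Z_i(K))$ is Zariski dense in $\BP^n_K$. Any point $p$ in this complement maps through $\nu$ to a point $c\in C(K)$ which cannot lie in $\phi_i(Y_i(K))$ for any $i$: a $K$-preimage $y\in Y_i(K)$ of $c$ would give a $K$-point $(y,p)$ of $Z_i$ lying over $p$, contradicting the choice of $p$. Hence $C(K)\setminus\bigcup_{i=1}^r\phi_i(Y_i(K))$ contains the image under $\nu$ of a Zariski dense subset of $\BP^n_K$, and is therefore Zariski dense in $C$.

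Taking the union over $C\in\CC'$, the set $X(K)\setminus\bigcup_{i=1}^r\phi_i(Y_i(K))$ contains, for each $C\in\CC'$, a subset which is Zariski dense in $C$. Its Zariski closure therefore contains $\bigcup_{C\in\CC'}C$, which by hypothesis is Zariski dense in $X$. Applying \Cref{co:thincoverequiv} concludes the proof. The only delicate point is verifying that the no-rational-section hypothesis descends from $C$ to the normalisation $\BP^n_K$ and that a point of $\BP^n_K$ missed by all $\psi_i(Z_i(K))$ genuinely produces a point of $C(K)$ missed by all $\phi_i(Y_i(K))$; both amount to the straightforward universal property of the fibre product, once $\nu$ is chosen so as to be an isomorphism over a dense open of $C$ (possible by resolving indeterminacies if needed).
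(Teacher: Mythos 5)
Your proof is correct and follows essentially the same route as the paper: both reduce the problem, via \Cref{co:thincoverequiv}, to the Hilbert Property of the $K$-rational members of $\CC'$, using the no-rational-section hypothesis to see that the restricted covers only produce thin subsets there. The only difference is presentational: you argue density directly on each $C$ (transporting to $\BP^n_K$ by a birational map and a fibre product), while the paper argues by contradiction with a proper closed subset $Z$ and works with the irreducible components of $\phi_i^{-1}(C_0)$ inside $X$; the routine care you flag about working over a dense open where the birational map is an isomorphism is exactly what is needed and suffices.
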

\begin{proof}
 Let us fix a finite family of covers $\phi_i:Y_i\to X$, $i=1,\dots, r$  with $\deg(\phi_i)>1$. 
Let $\CC'\subset\CC$ be as stated in the proposition. Because the union of $C\in\CC'$ is Zariski dense, upon replacing $\CC'$ by a subset if necessary, we may assume that for every subvariety $C\in\CC'$, $C$ is not contained in the ramification locus of any $\phi_i$ for $i=1,\dots,r$.
 In view of \Cref{prop:thincoverequiv}, to show the Hilbert Property for $X$, it suffices to show that 
 $$\left(\bigcup_{C\in \CC'}C(K)\right)\setminus \left(\bigcup_{i=1}^r\phi_i(Y_i(K))\right)$$ is not contained in any proper Zariski closed subset of $X$.
 
 Assume that there exists $Z\subset X$ proper Zariski closed containing all the $K$-points above. We fix $C_0\in\CC'$ such that $C_0\not\subset Z$. For every $i\in\{1,\dots, r\}$, let ${C_{ij}}$, $j=1,\dots,k_i$, be the irreducible components of $\phi_i^{-1}(C_0)$. 
 The maps $\phi_i|_{C_{ij}}:C_{ij}\to C_0$ are finite morphisms because $C_0$ does not have a rational section along $\phi_i$, and since $C_0$ is not contained in the ramification locus of $\phi_i$ by assumption, if $\dim(C_{ij})=\dim(C_0)$ then $\phi_i|_{C_{ij}}$ is dominant, and hence has degree $>1$. It thus follows from the Hilbert Property for rational varieties (see \cite[p. 30 Examples]{Serre}) that $C_0(K)$ is not thin, whence $$C_0(K)\setminus \left((Z\cap C_0)(K)\bigcup\left(\bigcup_{i=1}^r\bigcup_{j=1}^{k_i} \phi_i(C_{ij}(K))\right)\right)\neq\varnothing,$$ which is a contradiction. This finishes the proof.
\end{proof}

We now turn to the proofs of the main theorems. We will apply \Cref{criterion} only in the case where $\CC$ is a countable set of subcurves.

\subsection*{Proof of \Cref{alb}}
Consider the rational $K$-subcurves $\{C^1_n\}_{n\in1+2\BZ}$ on $X^1$. Let $\phi_i:Y_i\to X^1$, $i=1,\dots, r$ be any fixed finite family of covers with $\deg(\phi_i)>1$. We want to construct a finite index subcoset $\CN'\subset 1+2\BZ$ of odd integers such that $\{C^1_n\}_{n\in\CN'}$ such that all conditions of \Cref{criterion} are satisfied. Let us take a finite field extension $L$ of $K$ such that $H$ has an $L$-rational Weierstrass point $\infty$. We base change everything to $L$ (and we omit the subscript $L$). In view of the diagram $(\dagger)$, we shall replace $X^1$ by the isomorphic $X$ and $C^1_n$ by $C_n$ in the following. 

Let $\wt\phi_i:\wt Y_i\to J$ be the base change of $\phi_i$ along the Kummer quotient $\pi:J\to X$. It may be assumed that for certain $r_1$, all $\wt Y_i$, $1\leq i< r_1$ are not geometrically integral and the remaining $\wt Y_i$, $r_1\leq i\leq r$ are geometrically integral. For $r_1\leq i\leq r$, we may moreover assume that for certain $r_2\geq r_1$, the morphisms $\wt\phi_i,r_1\leq i\leq r_2$ are all unramified, while the remaining $\wt\phi_i,r_1<i\leq r$ are all ramified. 

We start with $$\CC=\{C_{n} : n \in 1+2\BZ\}.$$ If $r_2>r_1$, i.e.\ certain $\wt\phi_i$ are unramified, we consider $$d=\lcm(\deg\phi_i,i=r_1,\dots,r_2), \quad\CN^{\acute{e}t}=1+2d\BZ.$$ Then $\CN^{\acute{e}t}$ is a finite index coset of $\BZ$, and by \Cref{co:pullbackrationalcurve}, any $C_n$ with $n\in\CN^{\acute{e}t}$ does not have a rational section along $\phi_i$ for all $r_1\leq i\leq r_2$.
Otherwise if there are no such unramified covers, we just take $\CN^{\acute{e}t}=1+2\BZ$.

By \Cref{pullbackramified}, one obtains a finite index subcoset $\CN^{geo}\subset\CN^{\acute{e}t}$ such that any $C_n$ with $n\in\CN^{geo}$ does not have a rational section along $\phi_i$ for all $i=r_2+1,\dots,r$. Then on applying \Cref{le:pullbacknongeomint}, one obtains a further finite index subcoset $\CN'\subset\CN^{geo}$ such that any $C_n$ with $n\in\CN'$ does not have a rational section along $\phi_i$ for all $i=1,\dots,r_1-1$ (still over the field extension $L$ of $K$).

Any such $C_n^1,n\in \CN'$ cannot have a rational section along any $\phi_i$ over the base field $K$ as well. Thus, by using again \Cref{le:densefiniteindexcoset}, all conditions of \Cref{criterion} are satisfied for the family of rational curves $$\CC'=\{C_n:n\in\CN'\},$$ from which we conclude the Hilbert Property.
\qed

\subsection*{Proof of \Cref{dim2}} This follows from \Cref{alb} together with \Cref{desing} using the birational invariance of the Hilbert Property. \qed

\subsection*{Proof of \Cref{higher}} This follows immediately from \Cref{alb} since $H$ has by assumption a $K$-rational Weierstrass point (at infinity), and thus $X\cong X^1$ over $K$ in view of the diagram $(\dagger)$.  \qed

\section*{Acknowledgements} We are in debt to J.\ Demeio for generously sharing many key insights into the Hilbert Property which simplify and generalise our previous arguments considerably. The first author was supported by the Engineering and Physical Sciences Research Council (Grant number EP/R513143/1) and UKRI award UKRI094. The second author was supported by the DFG grant DE1646/4-2 at Leibniz Universität Hannover, and by the FWF grant No. P32428-N35 at IST Austria. This work was partly undertaken at the workshop ``Arithmetic Statistics and Local-Global Principles'' hosted by the Erwin Schrödinger International Institute for Mathematics and Physics, Vienna and the workshop ``Rational Points on Higher-Dimensional Varieties'' hosted by the International Centre for Mathematical Sciences, Edinburgh. We thank the organisers and the institutes for the hospitality.
Last but not least, we thank the anonymous referee for helpful remarks. 

\printbibliography

\end{document}